\newcommand{\bt}{\begin{theorem}}
\newcommand{\et}{\end{theorem}}
\newcommand{\bl}{\begin{lemma}}
\newcommand{\el}{\end{lemma}}
\newcommand{\bd}{\begin{definition}}
\newcommand{\ed}{\end{definition}}
\newcommand{\bc}{\begin{corollary}}
\newcommand{\ec}{\end{corollary}}
\newcommand{\bp}{\begin{proof}}
\newcommand{\ep}{\end{proof}}
\newcommand{\bx}{\begin{example}}
\newcommand{\ex}{\end{example}}
\newcommand{\bi}{\begin{exercise}}
\newcommand{\ei}{\end{exercise}}
\newcommand{\bo}{\begin{prop}}
\newcommand{\eo}{\end{prop}}
\newcommand{\br}{\begin{remark}}
\newcommand{\er}{\end{remark}}
\newcommand{\be}{\begin{equation}}
\newcommand{\ee}{\end{equation}}
\newcommand{\ba}{\begin{align}}
\newcommand{\ea}{\end{align}}
\newcommand{\bn}{\begin{enumerate}}
\newcommand{\en}{\end{enumerate}}
\newcommand{\bg}{\begin{align*}}
\newcommand{\bcs}{\begin{cases}}
\newcommand{\ecs}{\end{cases}}
\newcommand{\bean}{\begin{eqnarray*}}
\newcommand{\eean}{\end{eqnarray*}}
\newtheorem{example}{Example}[section]
\newtheorem{definition}{Definition}[section]
\newtheorem{theorem}{Theorem}[section]
\newtheorem{lemma}{Lemma}[section]
\newtheorem{cor}{Corollary}[section]
\newtheorem{prop}{Proposition}[section]
\newtheorem{remark}{Remark}[section]
\numberwithin{equation}{section}
\begin{document}
\title[Nondegeneracy of harmonic maps]{Nondegeneracy of harmonic maps from $\mathbb R^2$ to $\mathbb S^2$}
\author[G. Chen]{Guoyuan Chen}
\address{\noindent
School of Data Sciences, Zhejiang University of Finance \& Economics, Hangzhou 310018, Zhejiang, P. R. China}
\email{gychen@zufe.edu.cn}

\author[Y. Liu]{Yong Liu}
\address{\noindent School of Mathematics and Physics, North China Electric Power University, Beijing, China}
\email{liuyong@ncepu.edu.cn}

\author[J. Wei]{Juncheng Wei}
\address{\noindent
Department of Mathematics,
University of British Columbia, Vancouver, B.C., Canada, V6T 1Z2}
\email{jcwei@math.ubc.ca}

\begin{abstract}
We prove that all harmonic maps from $\mathbb R^2$ to $\mathbb S^2$ with finite energy are nondegenerate. That is, for any harmonic map $u$ from $\mathbb R^2$ to $\mathbb S^2$ of degree $m$ (in $\mathbb Z$), all bounded kernel maps of the linearized operator $L_u$ at $u$ are generated by these harmonic maps near $u$ and hence the real dimension of bounded kernel space of $L_u$ is $4|m|+2$.
\end{abstract}
\maketitle
\section{Introduction}

In this paper, we consider the harmonic maps given by
\begin{eqnarray}\label{e:harmonic-map}
\Delta u + |\nabla u|^2u=0, \ \ u:\mathbb R^2\to \mathbb S^2,
\end{eqnarray}
where $\mathbb S^2=\{u=(u_1,u_2,u_3):\,|u|=1\}$ is the unit sphere in $\mathbb R^3$, $\Delta $ is the Laplacian in $\mathbb R^2$ and
$$|\nabla u|^2=\sum_{j=1}^2 \sum_{i=1}^3 \left(\frac{\partial u_i}{\partial x_j}\right)^2.$$
Harmonic maps are critical points of the following associated energy
\begin{eqnarray}\label{e:energy1}
\mathcal E(u)=\int_{\mathbb R^2}|\nabla u|^2dx.
\end{eqnarray}
Harmonic maps also can be defined between general Riemannian manifolds. We refer the interested readers to  \cite{eells1978report}, \cite{eells1988another}, \cite{schoen1997lectures}, \cite{lin2008analysis}, \cite{helein2008harmonic} and the references therein for more results in this direction.

One important case is the harmonic maps between Riemann surfaces.
It is well known that oriented two-dimensional Riemannian manifold has a natural complex structure. Hence any holomorphic or anti-holomorphic map between oriented surfaces is a harmonic map. Conversely, harmonic map is not necessarily holomorphic or anti-holomorphic. There is a topological obstruction. For example, if a harmonic map $u$ from a surface $M$ to $\mathbb S^2$ satisfies certain degree conditions (for example, $|{\rm deg}(u)|$ is larger than the genus of $M$), then $u$ is holomorphic or anti-holomorphic. For more related results, see for example \cite{eells1964harmonic}, \cite{eells1978report}, \cite{eells1988another}, \cite{jost2006harmonic}, \cite{helein2008harmonic}, and the references therein.

A map $u$ from $\mathbb R^2$ to $\mathbb S^2$ is harmonic if and only if $u$ is holomorphic or anti-holomorphic. In particular, if we choose local coordinates of $\mathbb S^2$ as stereographic projections $\mathcal S$ and $\mathcal S'$ (see (\ref{e:stereo1}), (\ref{e:stereo2}) below) and consider $\mathbb R^2$ as the complex plane, then a map $u$ from $\mathbb R^2$ to $\mathbb S^2$ with finite topological degree is harmonic if and only if there is a (irreducible) rational complex valued function $f=q/p$ in $\mathbb C$ such that
$u=\mathcal S(f)$. Here $q$ and $p$ are complex polynomials with algebraic order $l$ and $n$ respectively. Some basic computations imply that ${\rm deg}(u)=\max\{l,n\}$ (see Section \ref{e:classification} below). We should mention that the classifications of harmonic maps from $\mathbb C$ to Lie groups or other symmetric spaces were also investigated, see, for example, \cite{uhlenbeck1989harmonic}, \cite{eells1983harmonic}, \cite{guest1997harmonic}, and the references therein.

It is clear that changing the coefficients (complex numbers) of $f$ continuously yields a family of harmonic maps.
Therefore, it generates kernel maps for the linearized operators $L_u$
$$ L_u [v]:= \Delta v +|\nabla u |^2 v + 2 (\nabla u \cdot \nabla v) u $$
of (\ref{e:harmonic-map}) at some fixed harmonic maps $u$. Let us consider a simple example. Assume $u(z)=\mathcal S\left(\frac{z^2+1}{z}\right)$. The degree of $u$ is $2$. Let $$u(z;a_0,a_1,a_2;b_0,b_1,b_2)=\mathcal S\left(\frac{(1+a_0)z^2+a_1z+1+a_2}{b_0z^2+(1+b_1)z+b_2}\right)$$
with some small complex numbers $a_0,a_1,a_2;b_0,b_1,b_2$. Then $u(z;a_0,a_1,a_2;b_0,b_1,b_2)$ are close to $u(z)$. Then this family of harmonic maps generates $10$ linearly independent bounded kernels of $L_u$.

Our main theorem asserts the nondegeneracy of all harmonic maps. We prove that
\begin{theorem}\label{t:general}
Let $u$ be a harmonic map from $\mathbb R^2$ to $\mathbb S^2$ of degree $m\in \mathbb Z$. Then all the bounded maps in the kernel of $L_u$ are generated by harmonic maps close to $u$. In particular, the real dimension of the bounded kernel space of $L_u$ is $4|m|+2$.
\end{theorem}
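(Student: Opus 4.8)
\emph{Plan of the proof.} We regard $L_u$ as acting on sections $v$ of $u^{\ast}T\mathbb S^2$; if one instead allows general $\mathbb R^3$-valued $v$, the component $w:=v\cdot u$ of a bounded kernel element solves the decoupled scalar equation $\Delta w+2|\nabla u|^2w=0$, and for $m\neq 0$ a Liouville-type argument (using $|\nabla u|^2\in L^1(\mathbb R^2)$ and the decay of $|\nabla u|$ at infinity) forces $w\equiv 0$, so the two formulations agree. By the classification of finite-energy harmonic maps recalled above, after the anti-conformal change of domain variable $z\mapsto\bar z$ if necessary we may assume $m\ge 0$ and $u=\mathcal S(f)$ with $f=q/p$ an irreducible rational function, $\gcd(q,p)=1$ and $\max\{\deg q,\deg p\}=m$. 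The case $m=0$ ($u$ constant, $L_u=\Delta$) is trivial, so assume $m\ge 1$; by elliptic regularity any bounded $v\in\ker L_u$ is smooth.

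Identifying $\mathbb R^2$ with $\mathbb C$, write $v(z)=D\mathcal S\big(f(z)\big)[\phi(z)]$ for a unique $\mathbb C$-valued function $\phi$ on $\mathbb C\setminus f^{-1}(\infty)$, using that $\zeta\mapsto D\mathcal S(f(z))[\zeta]$ is a conformal linear isomorphism onto $T_{u(z)}\mathbb S^2$. The crux is the linearization computation: since $u$ is holomorphic, $z\mapsto D\mathcal S(f(z))[1]$ is a \emph{holomorphic} frame of $u^{\ast}T\mathbb S^2$ (the image under $f$ of the holomorphic vector field $\partial_w$ on the target stereographic chart), and with respect to it the Jacobi operator has neither zeroth-order nor antilinear terms---the metric coefficients and the curvature of $\mathbb S^2$ conspire to cancel, just as the second variation of energy at a holomorphic map from a surface equals $\|\bar\partial v\|^2$. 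Carried out explicitly in terms of $f$, $f'$, $\phi$ and $\bar\partial\phi$, this yields
\[
L_u v=0\quad\Longleftrightarrow\quad \bar\partial\phi=0\ \text{ on }\ \mathbb C\setminus f^{-1}(\infty);
\]
equivalently, the bounded Jacobi fields along $u$ are precisely the bounded holomorphic sections of $u^{\ast}T\mathbb S^2$. I expect this identity to be the main obstacle.

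Next one reads off the singular structure of such a $\phi$. At a pole $z_0$ of $f$ of order $k$, expressing $v$ in the second stereographic chart $\mathcal S'$ gives $v=D\mathcal S'(1/f)[\widetilde\phi]$ with $\widetilde\phi=-\phi/f^2=-(p^2\phi)/q^2$; smoothness of $v$ at $z_0$ forces $\widetilde\phi$, hence $g:=p^2\phi$, to be holomorphic near $z_0$, so $\phi$ has a pole of order at most $2k$ there. As the poles of $f$ are exactly the zeros of $p$ counted with multiplicity, $g=p^2\phi$ extends to an entire function, and boundedness of $v$ reads
\[
|v(z)|=\frac{2|\phi(z)|}{1+|f(z)|^2}=\frac{2|g(z)|}{|p(z)|^2+|q(z)|^2}\le C;
\]
since $|p|^2+|q|^2=O(|z|^{2m})$ this forces $|g(z)|=O(|z|^{2m})$, whence $g$ is a polynomial of degree $\le 2m$. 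Therefore the bounded kernel of $L_u$ equals $\{\,D\mathcal S(f)[g/p^2]:g\in\mathbb C[z],\ \deg g\le 2m\,\}$, a complex vector space of complex dimension $2m+1$, i.e.\ of real dimension $4m+2$.

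Finally one verifies the reverse inclusion together with the geometric interpretation. Every $v=D\mathcal S(f)[g/p^2]$ with $\deg g\le 2m$ is bounded (as $|p|^2+|q|^2$ is positive on $\mathbb C$ and $\gtrsim|z|^{2m}$ at infinity) and lies in $\ker L_u$ (again using the chart $\mathcal S'$ near the poles of $f$), and it equals $\partial_\varepsilon|_{\varepsilon=0}\,\mathcal S\big((q+\varepsilon\,\delta q)/(p+\varepsilon\,\delta p)\big)$ for polynomials $\delta q,\delta p$ of degree $\le m$ solving $\delta q\,p-q\,\delta p=g$---solvable because the bilinear map $(\delta q,\delta p)\mapsto \delta q\,p-q\,\delta p$ sends $(\mathbb C_{\le m}[z])^2$ onto $\mathbb C_{\le 2m}[z]$ with one-dimensional kernel, that kernel being precisely the scaling redundancy $q/p=(\lambda q)/(\lambda p)$ (which is why the dimension is $4(m+1)-2=4m+2$ rather than $4(m+1)$). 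Hence every bounded element of $\ker L_u$ is the infinitesimal variation of a family of harmonic maps near $u$, and $\dim_{\mathbb R}\ker_{\mathrm{bdd}}L_u=4|m|+2$. Apart from the linearization identity of the second paragraph, the points demanding care are the order bookkeeping at the poles of $f$ and at $\infty$, and the vanishing of the normal component $w$.
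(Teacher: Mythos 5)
Your overall strategy parallels the paper's: read a bounded Jacobi field in the stereographic chart as a variation $\phi$ of the rational function $f=q/p$, show $\phi$ must be holomorphic, and then do the bookkeeping at the poles and at infinity to identify the kernel with derivatives of the family $(q+\varepsilon\,\delta q)/(p+\varepsilon\,\delta p)$. Your second half is correct and, in the dimension count, cleaner than the paper's: the chart change via $\mathcal S'$ forcing $g=p^2\phi$ to be entire, the bound $|g|\le C(|p|^2+|q|^2)=O(|z|^{2m})$ giving $\deg g\le 2m$, and the observation that $(\delta q,\delta p)\mapsto \delta q\,p-q\,\delta p$ maps $(\mathbb C_{\le m}[z])^2$ onto $\mathbb C_{\le 2m}[z]$ with exactly the scaling redundancy as kernel, are all sound.

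The genuine gap is precisely where you flagged it. The asserted equivalence $L_uv=0\iff\bar\partial\phi=0$ cannot hold as a pointwise identity and is not proved: $L_u$ is second-order elliptic, so its local solution space is strictly larger than that of the first-order equation $\bar\partial\phi=0$. The "second variation $=\|\bar\partial v\|^2$" structure gives at best $L_u=\bar\partial^{\ast}\bar\partial$, i.e.\ an equation of the form $\partial(\lambda^2\bar\partial\phi)=0$ with $\lambda$ the conformal factor; its kernel consists of $\phi$ with $\lambda^2\bar\partial\phi$ anti-holomorphic, not $\bar\partial\phi=0$. Passing from this to $\bar\partial\phi=0$ for a \emph{bounded} solution requires a global Liouville-type argument, and this is exactly the content of the paper's Proposition \ref{l:Lu-L1u}: one shows that $u_x\cdot v_x-u_y\cdot v_y$ and $u_x\cdot v_y+u_y\cdot v_x$ (the real components of $\lambda^2\bar\partial\phi$, in effect) are harmonic on all of $\mathbb R^2$, bounded, and tending to zero since $|\nabla u|\to 0$, hence identically zero; this reduces $L_u$ to the first-order operator $L_{1,u}[v]=v_x-v\times u_y-u\times v_y$. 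Without this step your argument only proves the inclusion of the $(4m+2)$-dimensional space of holomorphic deformations into $\ker L_u$, not the reverse. A secondary caution: your parenthetical claim that the normal component $w=v\cdot u$ must vanish "by a Liouville-type argument using $|\nabla u|^2\in L^1$" is not a valid argument as stated --- the same reasoning would rule out bounded kernel of $\Delta+|\nabla u|^2$, which for the degree-one bubble is the linearized Liouville operator with a three-dimensional bounded kernel; whether $\Delta+2|\nabla u|^2$ has bounded kernel is a genuinely spectral question. This does not affect the theorem, which concerns tangential variations, but it should not be presented as routine.
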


\begin{remark}
(1) We should point out that the degree $m$ of harmonic maps can be negative in Theorem \ref{t:general}. It corresponds to anti-holomorphic functions. See Section \ref{e:classification} below. Moreover, we emphasize that the harmonic maps we discussed here are general, especially, we do not restrict the harmonic maps to be \emph{corotational}. For nondegeneracy within corotational classes, see \cite{GKT1} and \cite{GKT2}.

(2) All bounded kernels of $L_u$ can be written down explicitly. See the proof of Proposition \ref{p:cr}. As an example, we will give the explicit formula of kernels for $m$-corotational case (Corollary \ref{t:main}).
\end{remark}

\begin{remark}
(1) For $m=1$, the nondegeneracy of $1$-corotational harmonic maps was obtained by \cite{davila2017singularity}. For case of half-harmonic map from $\mathbb R$ to $\mathbb S^1$ of degree $1$, the nondegeneracy was verified by \cite{sire2017nondegeneracy}. In \cite{chanillo2005asymptotic}, the authors proved the nondegeneracy of standard bubbles of degree $1$ for $\Delta u=2u_x\times u_y$ where $u$ is a map from $\mathbb R^2$ to $\mathbb R^3$.

(3) Since the set of harmonic maps from $\mathbb R^2$ to $\mathbb S^2$ with finite topological degree  corresponds to the space of complex rational functions by stereographic projection, it introduces a natural algebraic structure. Similar phenomena were found in Toda system, see \cite{wei2011nondegeneracy} \cite{lin2012classification}.

(2) Theorem \ref{t:general} asserts that the kernel space of $L_u$ is generated by changing the $2m+2$ complex coefficients of corresponding rational functions. Such kind of result was obtained for half wave maps from $\mathbb R$ to $\mathbb S^2$ in a recent paper \cite{lenzmann2017energy}. To be more precise, traveling solitary waves from $\mathbb R$ to $\mathbb S^2$ with degree $m$ can be generated by finite Blaschke products with degree $m$ which are holomorphic or anti-holomorphic functions. These finite Blaschke products depend on $2m+1$ real parameters. The authors proved that the null space of linearized operators is generated by differentiation these parameters and $x$-, $y$-rotations. An important ingredient in the proof is the classification and nondegeneracy of half-harmonic maps from $\mathbb R$ to $\mathbb S^1$. We should point out that the proof of  nondegeneracy in \cite{lenzmann2017energy} is done through analyzing the spectrum of linearized operator.

Our proof relies on the fact that all harmonic maps from $\mathbb R^2$ to $\mathbb S^2$ are minimizers (locally stable critical points) in its degree class (see (\ref{e:energy-first-order}) and (\ref{e:minus-degree}) below). That is, $u$ is a harmonic map with degree $m\ge 0$ (resp. $m<0$) if and only if $u$ satisfies the first order equation (\ref{e:JH}) (resp. (\ref{e:anti-JH})). Let $L_{1,u}$ be the linearized (first order) operator corresponding to (\ref{e:energy-first-order}) at $u$. Then we prove that for $m\ge 0$ the bounded kernel space of $L_u$ is the same as that of $L_{1,u}$ (see Proposition \ref{l:Lu-L1u} below). Then we verify that all
maps in the kernel of $L_{1,u}$ satisfies Cauchy-Riemann equations after stereographic projection (see Proposition \ref{p:cr} below). For $m<0$, the similar results correspond to anti-holomorphic and anti-version of Cauchy-Riemann equations.
\end{remark}

As a special case, we consider $m$-corotational harmonic maps. That is, $U_m(z)=\mathcal S(z^m)$, $m\in \mathbb N$, in $\mathbb C$.
In polar coordinates $(r,\theta)$ in $\mathbb R^2$,
the $m$-corotational harmonic maps can be written as
\begin{eqnarray}\label{e:standard}
U_m(r,\theta)=\left(
              \begin{array}{c}
                \cos m\theta\sin Q_m(r) \\
                \sin m\theta\sin Q_m(r) \\
                \cos Q_m(r) \\
              \end{array}
            \right)=\left(
                      \begin{array}{c}
                        \dfrac{2r^m\cos m\theta}{1+r^{2m}} \\
                        \dfrac{2r^m\sin m\theta}{1+r^{2m}} \\
                        \dfrac{r^{2m}-1}{r^{2m}+1} \\
                      \end{array}
                    \right),
\end{eqnarray}
where
$Q_m(r)=\pi-2\arctan (r^m).$
Theorem \ref{t:general} implies the explicit bounded kernel of the linearized operator at the standard $m$-corotational harmonic map $U_m$.
For simplicity of notations, we set
\begin{equation*}
E_1=\left(
             \begin{array}{c}
               -\sin m\theta \\
               \cos m\theta  \\
               0\\
             \end{array}
           \right)=\left(
                     \begin{array}{c}
                       ie^{im\theta} \\
                       0 \\
                     \end{array}
                   \right)
           ,\quad E_2=\left(
                                       \begin{array}{c}
                                         \cos m\theta\cos Q_m \\
                                         \sin m\theta\cos Q_m \\
                                         -\sin Q_m \\
                                       \end{array}
                                     \right)=\left(
                                               \begin{array}{c}
                                                 e^{im\theta}\cos Q_m \\
                                                 -\sin Q_m \\
                                               \end{array}
                                             \right)
                                     .
\end{equation*}
Then we have the following corollary.
\begin{cor}\label{t:main}
Assume that $m\ge 1$. The standard $m$-corotational harmonic map $U_m$ is nondegenerate. That is, all bounded maps in the kernel of $L_{U_m}$ are linear combinations of $\{E_{kj}, \tilde E_{\nu j}, \tilde E_{mj}\}$ ($k=0,1,\cdots,m$, $\nu=1,\cdots,m-1$ and $j=1,2$). Here
\begin{eqnarray}\label{e:ek1}
E_{k1}=\frac{r^{m-k}}{1+r^{2m}}(-\cos k\theta E_1 +\sin k\theta E_2),\quad
 E_{k2}=\frac{r^{m-k}}{1+r^{2m}}(\sin k\theta E_1 +\cos k\theta E_2),
\end{eqnarray}
where $k=0,1,\cdots, m$, and
\begin{eqnarray}\label{e:ek3}
\tilde E_{\nu1}=\frac{r^{m+\nu}}{1+r^{2m}}(-\cos \nu\theta E_1 +\sin \nu\theta E_2),
\quad
\tilde  E_{\nu2}=\frac{r^{m+\nu}}{1+r^{2m}}(\sin \nu\theta E_1 +\cos \nu\theta E_2),
\end{eqnarray}
where $\nu=1,\cdots, m-1$, and
\begin{eqnarray}\label{e:ek5}
\tilde E_{m1}=\left(
                \begin{array}{c}
                  0 \\
                  \dfrac{r^{2m}-1}{r^{2m}+1} \\
                  -\dfrac{2r^m\sin m\theta}{r^{2m}+1} \\
                \end{array}
              \right)
,
\quad
\tilde  E_{m2}=\left(
                \begin{array}{c}
                  \dfrac{r^{2m}-1}{r^{2m}+1} \\
                  0\\
                  -\dfrac{2r^m\cos m\theta}{r^{2m}+1} \\
                \end{array}
              \right),
\end{eqnarray}
In particular, $\dim {\rm ker}L_{U_m}=4m+2$.
\end{cor}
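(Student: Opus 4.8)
\emph{Strategy.} This corollary is the instance $u = U_m$ of Theorem \ref{t:general}, so the conceptual content is already available; the plan is simply to make the kernel explicit. By the classification recalled in Section \ref{e:classification}, the harmonic maps of degree $m$ near $U_m = \mathcal S(z^m)$ are exactly the maps $\mathcal S(f)$ with $f = q/p$ an irreducible rational function of degree $m$ close to $z^m$. Writing $q = z^m + \varepsilon a(z)$ and $p = 1 + \varepsilon b(z)$ with $a,b$ complex polynomials of degree $\le m$, the first-order variation of $f$ at $\varepsilon = 0$ is $a(z) - z^m b(z)$, which ranges over the whole space $\mathcal P_{2m} := \{h \in \mathbb C[z] : \deg h \le 2m\}$. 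Hence Theorem \ref{t:general} gives
\[
\ker L_{U_m} = \{\, D\mathcal S(z^m)[h] : h \in \mathcal P_{2m}\,\} ,
\]
and since $D\mathcal S(z^m)$ is injective on polynomials this is a real vector space of dimension $2(2m+1) = 4m+2$. Everything then reduces to matching this description with the explicit maps in the statement.

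\emph{Writing the kernel down.} First I would record the differential of stereographic projection: identifying points of $\mathbb R^3$ with pairs in $\mathbb C \times \mathbb R$ as in the statement, $\mathcal S(w) = \bigl(2w/(1+|w|^2),\, (|w|^2-1)/(|w|^2+1)\bigr)$, so that
\[
D\mathcal S(w)[\delta] = \frac{2}{(1+|w|^2)^2}\bigl(\delta - w^2\overline\delta,\ w\overline\delta + \overline w\delta\bigr), \qquad \delta \in \mathbb C .
\]
Next I would substitute $w = z^m = r^m e^{im\theta}$ and $\delta = z^j,\, i z^j$ ($0 \le j \le 2m$) and expand $e^{\pm i j\theta}$ in the orthonormal frame $\{E_1,E_2,U_m\}$ along $U_m$, using $\cos Q_m = (r^{2m}-1)/(r^{2m}+1)$ and $\sin Q_m = 2r^m/(1+r^{2m})$. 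A direct computation then yields
\[
D\mathcal S(z^m)[z^{m-k}] = -2E_{k2},\qquad D\mathcal S(z^m)[iz^{m-k}] = -2E_{k1}\qquad (0\le k\le m),
\]
and analogous explicit formulas for $D\mathcal S(z^m)[z^{m+\nu}]$, $D\mathcal S(z^m)[iz^{m+\nu}]$ ($1\le\nu\le m$), each equal to (the radial weight $r^{m+\nu}/(1+r^{2m})$) times a fixed combination of the two angular patterns of frequency $\nu$ against the frame $\{E_1,E_2\}$. These $4m+2$ maps are linearly independent — the radial weights $r^{m-k}/(1+r^{2m})$ ($0\le k\le m$) and $r^{m+\nu}/(1+r^{2m})$ ($1\le\nu\le m$), together with the angular frequency $k$ (resp.\ $\nu$) read off against the fixed frequency-$m$ frame, separate them all — so they form a basis of $\ker L_{U_m}$.

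\emph{Reorganizing into the stated basis.} The last step is a short, mode-by-mode linear-algebra computation showing that $\{E_{k1},E_{k2}\}_{k=0}^m \cup \{\tilde E_{\nu1},\tilde E_{\nu2}\}_{\nu=1}^{m-1}\cup\{\tilde E_{m1},\tilde E_{m2}\}$ of \eqref{e:ek1}--\eqref{e:ek5} is another basis of that same space. Indeed, the $E_{kj}$ are, up to the scalar $-2$, the derivatives $D\mathcal S(z^m)[z^{m-k}]$, $D\mathcal S(z^m)[iz^{m-k}]$; for each $\nu<m$, the pair $\tilde E_{\nu1},\tilde E_{\nu2}$ spans the ``radial weight $r^{m+\nu}/(1+r^{2m})$'' part of the frequency-$\nu$ kernel, i.e.\ the span of $D\mathcal S(z^m)[z^{m+\nu}]$ and $D\mathcal S(z^m)[iz^{m+\nu}]$, and is recovered from these (together with the low-mode $E_{\nu j}$) by a fixed invertible change of coefficients; and $\tilde E_{m1},\tilde E_{m2}$ are exactly the generators of the rotations of $\mathbb S^2$ about the $u_1$- and $u_2$-axes, corresponding to $h$ a combination of $1$ and $z^{2m}$ (resp.\ of $i$ and $iz^{2m}$). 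As both lists have $4m+2$ elements and span the same $(4m+2)$-dimensional space $\ker L_{U_m}$, they are both bases; this proves the corollary, including $\dim\ker L_{U_m} = 4m+2$.

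\emph{Main obstacle.} There is no genuine obstacle: Theorem \ref{t:general} already contains both the ``generated by harmonic maps'' statement and the dimension count, so the work here is pure computation. The only delicate point is the bookkeeping in the last step — keeping track of the $\theta$-Fourier modes and recognizing that the maps $\tilde E_{\nu j}$ with $\nu<m$ are not individual coefficient-derivatives but combinations of the ``low'' derivatives ($h=z^j$, $j\le m$) and the ``high'' ones ($j>m$); this is a routine triangular change of basis once the formulas for $D\mathcal S(z^m)[z^j]$ and $D\mathcal S(z^m)[iz^j]$ are in hand.
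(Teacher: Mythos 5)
Your proposal is correct and follows essentially the same route as the paper's proof: both invoke Theorem \ref{t:general} (through Propositions \ref{l:Lu-L1u} and \ref{p:cr}) to identify $\ker L_{U_m}$ with $\{D\mathcal S_{U_m}[h] : h\in\mathbb C[z],\ \deg h\le 2m\}$, of real dimension $4m+2$, and then evaluate the differential of the stereographic projection on the monomial directions $z^{m-k}, iz^{m-k}, z^{m+\nu}, iz^{m+\nu}$, which the paper realizes via the explicit families $z^m+tz^{m-k}$ and $z^m/(tz^{\nu}+1)$. The only substantive difference is that the paper identifies each $E_{kj}$ and $\tilde E_{\nu j}$ directly as $-\tfrac12 D\mathcal S_{U_m}$ applied to a single monomial (no mixing with low modes), so the ``triangular change of basis'' you defer as routine is precisely the computation that must be written out --- and it is the one place where care is needed, since matching the angular frequencies $(m\pm\nu)\theta$ produced by $D\mathcal S_{U_m}[z^{m+\nu}]$ against the frame $\{E_1,E_2\}$ is exactly where sign conventions in \eqref{e:ek3} are decided.
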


\begin{remark}
(1) For $m\le -1$, $U_m(z)=\mathcal S(\bar z^m)$. The similar results can be verified by reversing the rotation direction.

(2) Nondegeneracy for $m$-corotational harmonic map $U_m$ can also be proved by ODE method. See Appendix below.
\end{remark}

This paper is organized as follows. In Section 2, we recall some basic classifications results and compute the degree of general harmonic maps in terms of algebraic order of rational functions. Section 3 is devoted to the proof of Theorem \ref{t:general}. In Section 4, we give an explicit formula of the kernel maps in $m$-corotational case. In Appendix, we illustrate some details of computations, and prove Corollary \ref{t:main} by ODE methods.

\subsection*{Acknowledgement}
G. Chen is partially supported by Zhejiang Provincial Science Foundation of China (No. LY18A010023) and Zhejiang University of Finance and Economics.
J. Wei is partially supported by NSERC of Canada. Part of this work was finished while the first two authors were visiting the University of British Columbia in 2017.

\section{Preliminaries}\label{e:classification}

In this section, we first recall the classification facts for harmonic maps from $\mathbb R^2$ to $\mathbb S^2$ as well as compute their degrees.

It is well known that $\mathbb S^2$ is a complex manifold. For our application, we give a system of local charts by the stereographic projection. Let $z=(x,y)\in \mathbb R^2=\mathbb C$ and $s=(s_1,s_2,s_3)\in \mathbb S^2$. Define
\begin{eqnarray}\label{e:stereo1}
\mathcal S:\mathbb C\to \mathbb S^2\setminus\{N\}  \quad \mbox{by } s_1=\frac{2x}{1+|z|^2},\,s_2=\frac{2y}{1+|z|^2}\mbox{ and }s_{3}=\frac{|z|^2-1}{|z|^2+ 1}.
\end{eqnarray}
and
\begin{eqnarray}\label{e:stereo2}
\mathcal S':\mathbb C\to \mathbb S^2\setminus\{S\}  \quad \mbox{by } s_1=\frac{2x}{1+|z|^2},\,s_2=-\frac{2y}{1+|z|^2}\mbox{ and }s_{3}=\frac{1-|z|^2}{1+|z|^2},
\end{eqnarray}
Here $S=(0,0,-1)$ and $N=(0,0,1)$ denote the south and north pole respectively. Alternatively, in complex variable form,
\begin{eqnarray}\label{e:steregraphic-projection}
\mathcal S=\frac{1}{1+|z|^2}\left(
               \begin{array}{c}
                 2 z \\
                 |z|^2-1 \\
               \end{array}
             \right),\quad\mathcal S'=\frac{1}{1+|z|^2}\left(
               \begin{array}{c}
                 2\bar z \\
                 1-|z|^2 \\
               \end{array}
             \right).
\end{eqnarray}
Therefore, the transition function between these two charts is
$\frac{1}{z}$. It is holomorphic in $\mathbb C\setminus \{0\}$.

Topological degree of a $C^1$ map $u$ from $\mathbb R^2$ to $\mathbb S^2$ can be defined by the de Rham approach
\begin{eqnarray}\label{e:de-rham-degree}
{\rm deg}(u)=\frac{1}{4\pi}\int_{\mathbb R^2}u\cdot(u_y\times u_{x})=\frac{1}{4\pi}\int_{\mathbb R^2} u_{x}\cdot (u\times u_y).
\end{eqnarray}
It is well known that (\ref{e:de-rham-degree}) is equivalent to the Brouwer's degree for all $C^1$ maps. See, for example, \cite[Chapter III]{outerelo2009mapping}.

Since $u\perp u_{x}$ and $u\perp u_{y}$,
it holds that
$|u\times u_{y}|=|u||u_{y}|$. Then
we have
\begin{eqnarray}
\mathcal E(u)=\frac{1}{2}\int_{\mathbb R^2}(|u_{x}|^2+|u\times u_{y}|^2)dxdy.\notag
\end{eqnarray}
Rewrite
\begin{eqnarray}\label{e:energy-first-order}
\mathcal E(u)&=&\frac{1}{2}\int_{\mathbb R^2}|u_{x}-u\times u_{y}|^2+\int_{\mathbb R^2} u_{x}\cdot (u\times u_y)\notag\\
&=&\frac{1}{2}\int_{\mathbb R^2}|u_{x}-u\times u_{y}|^2+4\pi {\rm deg}(u).
\end{eqnarray}
If
\begin{eqnarray}\label{e:JH}
u_{x}=u\times u_{y},
\end{eqnarray}
then $\mathcal E(u)=4\pi {\rm deg}(u)$. Hence ${\rm deg}(u)\ge 0$. Since Brouwer's degree is invariant by homotopy deformations, we find that $u$ is a minimizer in its degree class.

On the other hand, we also have that
\begin{eqnarray}\label{e:minus-degree}
\mathcal E(u)&=&\frac{1}{2}\int_{\mathbb R^2}(|u\times u_{x}|^2+|u_{y}|^2)dxdy\notag\\
&=&\frac{1}{2}\int_{\mathbb R^2}|u\times u_{x}-u_{y}|^2dxdy+\int_{\mathbb R^2}(u\times u_{x})\cdot u_{y}dxdy\notag\\
&=&\frac{1}{2}\int_{\mathbb R^2}|u\times u_{x}-u_{y}|^2dxdy-4\pi {\rm deg}(u).
\end{eqnarray}
Hence if $u$ satisfies
\begin{eqnarray}\label{e:anti-JH}
u_{y}=u\times u_{x},
\end{eqnarray}
then $\mathcal E(u)=-4\pi  {\rm deg}(u)$. It follows that ${\rm deg}(u)\le0$. Similarly by the invariance of deformations of Brouwer's degree, we have that $u$ is a local minimizer in its degree class.

Before discussing the nondegeneracy problem, we first recall the classification result.
\begin{prop}\label{p:harmonic-holomorphic}
A map $u$ from $\mathbb R^2$ to $\mathbb S^2$ is harmonic if and only if $u$ is holomorphic or anti-holomorphic.
\end{prop}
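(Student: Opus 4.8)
The plan is to pass to a stereographic chart and rewrite the harmonic map equation as a differential equation for a single complex-valued function, then recognize the equation as one that forces holomorphicity or anti-holomorphicity. First I would work away from the (at most discrete) preimage of the north pole $N$, compose $u$ with $\mathcal S^{-1}$, and set $f = \mathcal S^{-1}\circ u : \mathbb R^2 \to \mathbb C$. Using the conformal change of coordinates, the energy density $|\nabla u|^2$ pulls back to $4|\nabla f|^2/(1+|f|^2)^2$, so the energy $\mathcal E(u)$ becomes the Dirichlet-type integral $\int_{\mathbb R^2} \frac{4|\nabla f|^2}{(1+|f|^2)^2}\,dx$, and its Euler--Lagrange equation (in the complex variable $z=x+iy$, with $\partial = \partial_z$, $\bar\partial = \partial_{\bar z}$) is
\begin{eqnarray}
\partial\bar\partial f - \frac{2\bar f}{1+|f|^2}\,(\partial f)(\bar\partial f) = 0.\notag
\end{eqnarray}
Equivalently, the quantity $\partial\left[\frac{\bar\partial f}{(1+|f|^2)}\right]$ or a suitably weighted version vanishes; the key structural point is that the nonlinear term is a product of a $\partial$-derivative and a $\bar\partial$-derivative of $f$.

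The key step is then an algebraic/complex-analytic observation: the harmonic map equation above can be written in the Weierstrass form $\bar\partial\left(\frac{\partial f}{1+|f|^2}\right) = 0$ up to a factor, which shows that $\frac{\partial f \cdot \overline{\bar\partial f}}{(1+|f|^2)^2}$ is (anti-)holomorphic, or more directly one shows that the smooth function $g := \frac{(\partial f)(\bar\partial f)}{(1+|f|^2)^2}$ satisfies a first-order linear equation forcing $g \equiv 0$. Concretely: multiply the equation by $\overline{\bar\partial f}$ and by $\bar\partial f$ respectively and combine to show $\bar\partial\big(|\partial f|^2/(1+|f|^2)^2\big)$ and $\partial\big(|\bar\partial f|^2/(1+|f|^2)^2\big)$ cancel appropriately, yielding that the difference of the two "energy densities" is harmonic and, by the finite-energy hypothesis (integrability) together with a Liouville-type argument, must vanish pointwise — hence at every point either $\partial f = 0$ or $\bar\partial f = 0$. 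A connectedness argument on $\mathbb R^2$ (the zero sets of $\partial f$ and $\bar\partial f$ are analytic, so one of them has empty interior complement's... more carefully: the set where $\bar\partial f = 0$ is open by the identity theorem applied locally, similarly for $\partial f$) then upgrades "pointwise one vanishes" to "one vanishes identically on all of $\mathbb R^2$", i.e. $f$ is globally holomorphic or globally anti-holomorphic. Finally I would extend across the removed points: since $u$ extends smoothly over the preimage of $N$ and $f$ is bounded near those points in the other chart $\mathcal S'$, removable-singularity for holomorphic functions gives a holomorphic (or anti-holomorphic) map to all of $\mathbb S^2$, completing one implication.

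The converse — that any holomorphic or anti-holomorphic map is harmonic — is the easy direction: in the stereographic chart, if $f$ is holomorphic then $\bar\partial f = 0$, so the nonlinear product term in the Euler--Lagrange equation vanishes identically and $\partial\bar\partial f = 0$ is automatic; the anti-holomorphic case is identical with the roles of $\partial$ and $\bar\partial$ swapped. I expect the main obstacle to be the global step: promoting the pointwise dichotomy $\{\partial f = 0\} \cup \{\bar\partial f = 0\} = \mathbb R^2$ to a \emph{global} choice, and handling the behavior at the poles (the removable-singularity argument across the finitely many points where $u = N$). The finite-energy hypothesis is what makes this work — without it there are harmonic maps that are neither holomorphic nor anti-holomorphic — so I would make sure the Liouville/connectedness argument genuinely uses $\mathcal E(u) < \infty$, rather than sneaking it in implicitly.
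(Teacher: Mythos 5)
Your overall strategy is the same as the paper's: the paper does not prove this proposition in Section 2 (it is recalled with references), but its own argument is contained in Appendix Lemmas \ref{l:first-cr} and \ref{l:equivalence} -- show that finite energy forces a Hopf-differential/conformality identity, deduce the first order equation \eqref{e:JH} or \eqref{e:anti-JH}, and then translate that into the Cauchy--Riemann equations for $\mathcal S^{-1}(u)$. You run the same scheme entirely in the stereographic chart instead of in the ambient $\mathbb R^3$ coordinates; that is a legitimate variant, at the cost of having to handle the poles of $f$ separately (which you do address via removable singularities).

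However, two of your intermediate claims are wrong as stated. First, the ``difference of the two energy densities'' $\frac{4(|\partial f|^2-|\bar\partial f|^2)}{(1+|f|^2)^2}$ is the Jacobian density, and it is \emph{not} harmonic: for $f(z)=z$ it equals $4(1+|z|^2)^{-2}$, whose Laplacian is $\frac{16r^2-8}{(1+r^2)^4}\neq 0$. (The asserted identity $\bar\partial\bigl(\partial f/(1+|f|^2)\bigr)=0$ also fails.) The quantities that are actually harmonic are $|u_x|^2-|u_y|^2$ and $u_x\cdot u_y$, the real and imaginary parts of the holomorphic Hopf differential $4\,u_z\cdot u_z=\frac{4\,\partial f\,\overline{\bar\partial f}}{(1+|f|^2)^2}$ (which you do name correctly in passing); these are dominated by $|\nabla u|^2\in L^1$, so the mean value property kills them and yields the pointwise dichotomy $\partial f=0$ or $\bar\partial f=0$. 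This is exactly the computation in Lemma \ref{l:equivalence}, and your argument should be routed through it rather than through the Jacobian. Second, your globalization step is not justified: the set $\{\bar\partial f=0\}$ is closed, not open, so ``the identity theorem applied locally'' does not show it is open. The correct mechanism is that $\partial f$ satisfies $\bar\partial(\partial f)=\frac{2\bar f\,\bar\partial f}{1+|f|^2}\,\partial f$, a $\bar\partial$-equation with locally bounded coefficient, so by the similarity principle $\partial f=e^{s}h$ with $h$ holomorphic and hence $\partial f$ either vanishes identically or has isolated zeros; the same holds for $\bar\partial f$, and since $\mathbb C$ is not a union of two discrete sets, one of the two must vanish identically. (The paper's own write-up of Lemma \ref{l:equivalence} is also terse at this point -- it fixes $k$ with $ku_x=u\times u_y$ and shows $|k|=1$ without discussing why the sign cannot jump -- so you are right to flag this as the genuine obstacle; but your proposed fix does not work as written.)
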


\begin{remark}
For the relation between harmonic maps and holomorphic (or anti-holomorphic) maps from general surfaces to $\mathbb S^2$, we refer the reader to \cite[11.6]{eells1978report}, \cite[Section 2.2]{helein2008harmonic} and the references therein.
\end{remark}

Note that if $u$ satisfies (\ref{e:JH}) or (\ref{e:anti-JH}), then $u$ is a critical point of $\mathcal E$. That means that $u$ is a solution of (\ref{e:harmonic-map}). Conversely, Proposition \ref{p:harmonic-holomorphic}, Lemma \ref{l:first-cr} and Remark \ref{r:anti-cr} below tell us that if $u$ is a solution of (\ref{e:harmonic-map}), then $u$ satisfies (\ref{e:minus-degree}) or (\ref{e:anti-JH}). Hence we find that all harmonic maps from $\mathbb R^2$ to $\mathbb S^2$ are stable.

By Proposition \ref{p:harmonic-holomorphic}, in local coordinates, the harmonic maps are holomorphic or anti-holomorphic functions.
\begin{cor}
A map $u$ from $\mathbb R^2$ to $\mathbb S^2$ is harmonic with ${\rm deg}(u)\ge 0$ (resp. ${\rm deg}(u)< 0$) if and only if $u=\mathcal S(p/q)$ where $p$ and $q$ are complex polynomials of $z$ (resp. $\bar z$). That is, $u$ is a harmonic map if and only if $u$ is a stereographic projection of a rational function of $z$ (resp. $\bar z$). We call (irreducible) rational function $p/q$ the generate function function of harmonic map $u$. Moreover, all harmonic maps from $\mathbb R^2$ to $\mathbb S^2$ are stable.
\end{cor}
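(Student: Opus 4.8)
The plan is to combine Proposition \ref{p:harmonic-holomorphic} with one analytic fact --- a finite-energy meromorphic function on $\mathbb C$ is rational --- and to read off stability from the first-order equations \eqref{e:JH}, \eqref{e:anti-JH}. First I would reduce to the holomorphic case. By Proposition \ref{p:harmonic-holomorphic} a harmonic $u\colon\mathbb R^2\to\mathbb S^2$ is holomorphic or anti-holomorphic for the standard complex structures on $\mathbb C=\mathbb R^2$ and on $\mathbb S^2$; pre-composing $u$ with the conjugation $z\mapsto\bar z$ of the domain interchanges these two classes and reverses the sign of the degree by \eqref{e:de-rham-degree}. Hence it suffices to prove that the holomorphic maps are exactly the harmonic maps with $\deg(u)\ge0$ and that these coincide with the maps $\mathcal S(p/q)$ with $p,q$ polynomials in $z$; the anti-holomorphic statement (rational in $\bar z$, $\deg(u)\le0$) then follows by conjugating, the two descriptions overlapping precisely in the constant maps.

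Next I would pass from $u$ to a meromorphic function on the whole plane. On $\Omega=u^{-1}(\mathbb S^2\setminus\{N\})$ set $f=\mathcal S^{-1}\circ u$, which is holomorphic because $u$ is; on $u^{-1}(\mathbb S^2\setminus\{S\})$ set $g=(\mathcal S')^{-1}\circ u$. Since the transition function between the charts in \eqref{e:steregraphic-projection} is $z\mapsto1/z$, holomorphic on $\mathbb C\setminus\{0\}$, one has $g=1/f$ on the overlap, so $f$ and $g$ patch to a single holomorphic map $\mathbb C\to\widehat{\mathbb C}$; equivalently $f$ is meromorphic on all of $\mathbb C$. The converse is immediate: a rational $p/q$ gives, via $\mathcal S$, a holomorphic --- hence by Proposition \ref{p:harmonic-holomorphic} harmonic --- map whose topological degree equals the algebraic degree $\max\{\deg p,\deg q\}\ge0$ as computed in Section \ref{e:classification}.

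The heart of the argument, and where finiteness of the energy is used, is a removable-singularity step at $z=\infty$. For a holomorphic $u=\mathcal S(f)$ the energy density is, up to a fixed positive constant, the pullback under $f$ of an area form on $\widehat{\mathbb C}$, so by the area formula $\mathcal E(u)$ is a positive multiple of $\int_{\widehat{\mathbb C}}n(c)\,d\mathrm{vol}(c)$, where $n(c)$ denotes $\#f^{-1}(c)$ counted with multiplicity; consequently $\mathcal E(u)<\infty$ forces $n(c)<\infty$ for a generic value $c$. Now $f$ either extends to a holomorphic map $\widehat{\mathbb C}\to\widehat{\mathbb C}$ or has an essential singularity at $\infty$; in the latter case the great Picard theorem gives infinitely many solutions of $f=c$ in every neighbourhood of $\infty$ for all but at most two values $c$, contradicting $n(c)<\infty$. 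Hence $f$ extends to a holomorphic self-map of $\widehat{\mathbb C}$, that is, a rational function $p/q$, so $u=\mathcal S(p/q)$ and $\deg(u)=\max\{\deg p,\deg q\}\ge0$. (Alternatively one could invoke the Sacks--Uhlenbeck removable-singularity theorem to extend $u$ to a harmonic map of $\mathbb S^2$ and then quote the classification of those, but the complex-analytic argument above is self-contained.)

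Finally, for stability: holomorphicity of $u$ means its differential is complex linear, and since the complex structure on $T_s\mathbb S^2$ is $v\mapsto s\times v$ this is equivalent to the first-order equation \eqref{e:JH} --- this is Lemma \ref{l:first-cr}. Substituting \eqref{e:JH} into \eqref{e:energy-first-order} gives $\mathcal E(u)=4\pi\deg(u)$, which is the infimum of $\mathcal E$ over the homotopy class of $u$ because the degree is a homotopy invariant; hence $u$ is an energy minimizer, in particular a stable critical point, and the anti-holomorphic case is identical using \eqref{e:anti-JH}, \eqref{e:minus-degree} and Remark \ref{r:anti-cr}. I expect the third paragraph to be the only real obstacle: turning ``finite Dirichlet energy'' into ``finitely many preimages of a generic point'' via the area formula and then excluding an essential singularity at infinity; the rest is bookkeeping around Proposition \ref{p:harmonic-holomorphic} and the first-order reformulation of holomorphicity.
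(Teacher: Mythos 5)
Your proof is correct, and it is worth noting that it does strictly more than the paper itself, which offers no proof of this corollary at all: the paper states it as an immediate consequence of Proposition \ref{p:harmonic-holomorphic} (itself quoted from the literature) together with the preceding paragraph, where stability is read off from the identities \eqref{e:energy-first-order} and \eqref{e:minus-degree} combined with Lemma \ref{l:first-cr} and Remark \ref{r:anti-cr} --- exactly your last paragraph. Your middle two paragraphs supply the one genuinely nontrivial step that the paper leaves implicit, namely that a finite-energy holomorphic map $\mathbb C\to\mathbb S^2$ is the stereographic projection of a \emph{rational} function rather than of an arbitrary meromorphic one (e.g.\ $\mathcal S(e^z)$ is harmonic but not of the stated form, so finiteness of the energy must enter somewhere). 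Your route --- patch $f=\mathcal S^{-1}\circ u$ and $(\mathcal S')^{-1}\circ u$ via the transition map $1/z$ into a single meromorphic function, convert finite Dirichlet energy into finite generic multiplicity by conformality and the area formula, and exclude an essential singularity at $\infty$ by the great Picard theorem --- is sound and self-contained; the Sacks--Uhlenbeck alternative you mention in passing is the other standard way to close this gap. The reduction of the anti-holomorphic case by precomposing with $z\mapsto\bar z$, which reverses the sign in \eqref{e:de-rham-degree}, and your remark that the two cases overlap only in the constants (degree $0$), are also correct and slightly more careful than the paper's statement, which assigns degree $0$ only to the holomorphic side.
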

\begin{remark}
In what follows, we shall focus on the nonnegative degree harmonic maps. Harmonic maps of negative degree correspond to anti-holomorphic functions. The proof of nondegeneracy is similar.
\end{remark}

We now in the position to compute the degree of harmonic maps. The degree of rational functions was computed in \cite{segal1979topology} (see also \cite{cohen1991topology}). For our utilities, we give a basic computation. Let us investigate some examples first.
\begin{example}
Some direct computations verify that
\begin{itemize}
  \item[(1)] ${\rm deg}(\mathcal S( z))={\rm deg}(\mathcal S(\frac{1}{z}))=1$.
${\rm deg}(\mathcal S(z^2))={\rm deg}(\mathcal S(\frac{1}{z^2}))=2$.
${\rm deg}(\mathcal S\circ z^m)={\rm deg}(\mathcal S(\frac{1}{z^m}))=m$ with $m\ge 0$.
  \item[(2)] ${\rm deg}(\mathcal S(z+\frac{1}{z}))=2$.
  \item[(3)] ${\rm deg}(\mathcal S(\bar z))={\rm deg}(\mathcal S(\frac{1}{\bar z}))=-1$. And ${\rm deg}(\mathcal S(\bar z^m))={\rm deg}(\mathcal S(\frac{1}{\bar z^m}))=-m$ with $m>0$.
\end{itemize}
\end{example}

In general, we have the following result.
\begin{prop}\label{p:degree}
Assume that $u=\mathcal S (g+\dfrac{p}{q})$ where $g$, $p(\ne 0)$ and $q$ are three polynomial complex functions with order $s$, $l$ and $t$ respectively, $l<t$, and $p/q$ is irreducible. Then
\begin{eqnarray}\label{e:degree-f-g}
{\rm deg}(u)=s+t.
\end{eqnarray}
\end{prop}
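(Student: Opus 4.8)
The plan is to reduce the computation of the degree to the two prototype cases $\mathcal S(z^m)$ and $\mathcal S(\bar z^m)$ (already computed in the Example), and to handle the general expression $g + p/q$ by a combination of a homotopy argument and the multiplicativity of the degree under the natural stratification of the target sphere. Writing $f = g + p/q$, the first observation is that since $l < t$, the rational function $f$ has a pole at $\infty$ of order exactly $s$ (coming from $g$, assuming $g$ is genuinely of order $s$) if $s \ge 1$, and the finite poles of $f$ are exactly the roots of $q$, counted with multiplicity, giving total pole order $t$. Thus $f$, viewed as a holomorphic map $\mathbb{CP}^1 \to \mathbb{CP}^1$, has degree equal to the total number of poles counted with multiplicity, namely $s + t$. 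The heart of the matter is therefore to argue that $\deg(\mathcal S(f))$, defined by the de Rham integral \re{e:de-rham-degree}, equals the degree of $f$ as a rational self-map of $\mathbb{CP}^1$.

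First I would make this precise as follows. The stereographic projection $\mathcal S \colon \mathbb C \to \mathbb S^2 \setminus \{N\}$ extends to a diffeomorphism $\overline{\mathcal S}\colon \mathbb{CP}^1 \to \mathbb S^2$ which is orientation preserving (this is the standard identification; one checks it on a single chart using \re{e:steregraphic-projection}). Since $f$ is a rational function, $\mathcal S(f) = \overline{\mathcal S} \circ f \circ \iota$ where $\iota \colon \mathbb C \hookrightarrow \mathbb{CP}^1$ is the inclusion; because $\mathbb C$ has full measure in $\mathbb{CP}^1$ and the integrand in \re{e:de-rham-degree} is the pullback of the normalized area form on $\mathbb S^2$, the integral \re{e:de-rham-degree} computes exactly $\deg(\overline{\mathcal S} \circ f) = \deg(f)$ as a map of closed oriented surfaces. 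So it remains to count $\deg(f)$ for $f = g + p/q$. For a non-constant rational map of $\mathbb{CP}^1$ the degree equals the number of preimages of a generic point, equivalently the number of poles with multiplicity, equivalently $\max\{\deg(\text{numerator}), \deg(\text{denominator})\}$ after writing $f$ as a single reduced fraction $P/Q$. Clearing denominators, $f = (gq + p)/q$; since $\deg(p) = l < t = \deg(q)$ and $\deg(gq) = s + t > l$ (so no cancellation of the top term), the numerator $gq + p$ has degree exactly $s + t$, while $\deg(q) = t < s+t$, and irreducibility of $P/Q$ after removing common factors does not change the maximum because any common factor divides both $q$ and $gq+p$, hence divides $p$, contradicting irreducibility of $p/q$ (here one also uses that $g$ is a polynomial so $\gcd(gq+p, q) = \gcd(p,q) = 1$). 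Therefore $\deg(f) = s + t$, and for the anti-holomorphic case one replaces $z$ by $\bar z$, which reverses orientation and yields $\deg = -(s+t)$, consistent with the sign conventions in \re{e:energy-first-order}--\re{e:minus-degree}.

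I would carry out the steps in this order: (i) record that $\overline{\mathcal S}$ is an orientation-preserving diffeomorphism $\mathbb{CP}^1 \to \mathbb S^2$; (ii) show \re{e:de-rham-degree} for $\mathcal S(f)$ computes the topological degree of $\overline{\mathcal S}\circ f \colon \mathbb{CP}^1 \to \mathbb{CP}^1$, using that $\mathbb C$ is of full measure; (iii) compute that degree purely algebraically as $\max$ of numerator/denominator degrees of the reduced form of $f$; (iv) verify no top-degree cancellation occurs in $gq + p$ given $l < t \le s+t$, and that coprimality is preserved; (v) deduce \re{e:degree-f-g}. The main obstacle, and the only genuinely nontrivial point, is step (ii): one must be careful that the improper integral over $\mathbb R^2 = \mathbb C$ really does pick up the full degree including the contribution ``at infinity'' — equivalently, that finite-energy forces $f$ to extend continuously to $\mathbb{CP}^1$ so that no degree is lost at the puncture. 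This is where the hypothesis that $u = \mathcal S(f)$ has finite energy (hence $f$ is rational, not merely meromorphic with an essential-type behaviour at $\infty$) is essential; given that, the extension is automatic and the measure-zero complement $\{N\}$ contributes nothing to the integral.
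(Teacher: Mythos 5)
Your proposal is correct, and it rests on the same underlying algebraic fact as the paper's proof while packaging the topology differently. The paper clears denominators exactly as you do, observes that $gq+p$ has $s+t$ zeros, and then computes ${\rm deg}(u)$ by summing the local Brouwer degrees of $f$ at those zeros over the punctured plane $\mathbb C\setminus\{b_1,\dots,b_t\}$ and invoking the composition formula for $\mathcal S\circ f$ at the south pole; in other words, it counts preimages of $S$. You instead compactify, identify the de Rham integral (\ref{e:de-rham-degree}) with the degree of the extended map $\mathbb{CP}^1\to\mathbb S^2$, and then read off the degree of the rational self-map as $\max\{\deg(gq+p),\deg(q)\}$, equivalently by counting poles. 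What your route buys: it handles multiple roots automatically (the paper assumes ``without loss of generality'' that $gq+p$ has $s+t$ distinct roots), and it makes explicit a point the paper leaves implicit, namely why the improper integral over $\mathbb R^2$ captures the full degree of the compactified map with no contribution lost at the poles of $f$ or at infinity. What it costs: you must justify that $\overline{\mathcal S}$ is an orientation-preserving diffeomorphism of $\mathbb{CP}^1$ onto $\mathbb S^2$ and that the integrand in (\ref{e:de-rham-degree}) is the pullback of the normalized area form with the correct sign --- the same orientation bookkeeping the paper compresses into the assertion ${\rm deg}(\mathcal S,B_\delta(a_j),S)=1$. The decisive algebraic step (no top-degree cancellation in $gq+p$ because $\deg(gq)=s+t>l$, and $\gcd(gq+p,q)=\gcd(p,q)=1$ so reduction does not lower the maximum) is common to both arguments and is in fact stated more carefully in yours.
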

\begin{proof}
Let $f(z)=g(z)+\dfrac{p(z)}{q(z)}$. Then $f(z)=0$ yields that
\begin{eqnarray}\label{e:polynomial}
g(z)q(z)+p(z)=0.
\end{eqnarray}
Without loss of generality, we may assume that there are $s+t$ different roots, $a_1,\cdots, a_s,a_{s+1},\cdots, a_{s+t}$, for (\ref{e:polynomial}).

Assume that $q(z)=0$ has roots $b_1,\cdots, b_t$. It is clear that $b_1,\cdots, b_t$ can not be roots of (\ref{e:polynomial}). Now we prove that in $\Omega:=\mathbb C\setminus \{b_1,\cdots, b_t\}$, ${\rm deg}(f)=s+t$. Indeed, since $\mathcal S\circ f$ maps $b_1,\cdots, b_t$ to $N$,
\begin{eqnarray}
{\rm deg}(f,\Omega, 0)=\sum_{j=1}^{s+t}{\rm deg}(f,B_{\varepsilon}(a_j), 0),\notag
\end{eqnarray}
where $\varepsilon>0$ is a sufficient small constant. Recall that
\begin{eqnarray}
{\rm deg}(f,B_{\varepsilon}(a_j), 0)=1,\quad \forall \varepsilon>0 \mbox{ sufficiently small},\notag
\end{eqnarray}
and
\begin{eqnarray}
{\rm deg}(\mathcal S,B_{\delta}(a_j), S)=1.\notag
\end{eqnarray}
Hence by formula for degree of composition maps, we obtain (\ref{e:degree-f-g}). This completes the proof.
\end{proof}

\begin{remark}
(1) In general, a rational function $f(z)$ in $\mathbb C$ can be represented by
\begin{eqnarray}
f(z)=\sum_{l=0}^s a_{s-l}z^l+\sum_{k=1}^K\sum_{j=1}^{J_k}\frac{a_{k,j}}{(z-z_k)^j},\quad \mbox{for }z\in \mathbb C\setminus\{z_1,\cdots,z_K\}.\notag
\end{eqnarray}
where $a_s\ne0$ and $a_{k,J_k}\ne 0$ ($k=1,\cdots,K$). In what follows, we call $J_k$ the singular order of $z_k$.
Hence the degree of harmonic map $u=\mathcal S(f)$ is
\begin{eqnarray}
{\rm deg}(u)=s+\sum_{k=1}^K J_k.\notag
\end{eqnarray}

(2) If we write (irreducible) rational functions as $f=p/q$ where $p$ and $q$ are two polynomial complex functions with order $l$ and $n$, then
\begin{eqnarray}
{\rm deg} ({\mathcal S } (f))=\max \{l,n\}. \notag
\end{eqnarray}

\end{remark}

We now consider the relation between harmonic maps $u(z)=\mathcal S(f)(z)$ and $w(z)=\mathcal S(1/f)(z)$ where $f(z)=f_1(z)+if_2(z)$ is a rational function. That is,
\[
u_{1}=\frac{2f_1}{|f|^2+1},\quad
u_{2}=\frac{2f_2}{|f|^2+1},\quad
u_{3}=\frac{|f|^2-1}{|f|^2+1}.
\]
Let $\tilde u=Q_{1,\alpha}(u)$ be a rotation of $u$ with respect to $u_1$-axis. Hence,
\[
\tilde u_{1}=\frac{2f_1}{|f|^2+1}
\]
\[
\tilde u_{2}=\frac{2}{|f|^2+1}(f_2\cos \alpha-(|f|^2-1)\sin
\alpha)
\]%
\[
\tilde u_{3}=\frac{2}{|f|^2+1}(f_2\sin \alpha+(|f|^2-1)\cos
\alpha)\]%

Some direct computations yield that
\begin{eqnarray}\label{e:u1-rotation}
\mathcal S^{-1}(Q_{1,\alpha}(u))=\frac{u_1+i\tilde u_2}{1-\tilde u_3}=-i\cot\frac{\alpha}{2}+\frac{1}{\sin^2\frac{\alpha}{2}(f-i\cot\frac{\alpha}{2})}.
\end{eqnarray}
In particular, let $\alpha=\pi$, then (\ref{e:u1-rotation}) implies
\begin{eqnarray}
\frac{u_1+i\tilde u_2}{1-\tilde u_3}=-i\cot\frac{\alpha}{2}+\frac{1}{\sin^2\frac{\alpha}{2}(f-i\cot\frac{\alpha}{2})}=\frac{1}{f}.\notag
\end{eqnarray}
That means that the difference between $u(z)=\mathcal S(f)$ and $w(z)=\mathcal S(\frac{1}{f})$ is a rotation.

Moreover, note that
the tangent map of $Q_{1,\alpha}$ is given by
\begin{eqnarray}
v:=\frac{d}{d\alpha}Q_{1,\alpha}(u)|_{\alpha=0}=\left(
                                            \begin{array}{c}
                                              0 \\
                                              -u_3 \\
                                              u_2 \\
                                            \end{array}
                                          \right).\notag
\end{eqnarray}

\section{Proof of Theorem \ref{t:general}}\label{s:proof}
In this section, we shall prove the nondegeneracy of harmonic maps.

Let us compute the associated linearized operators at harmonic maps first.
The associated linearized operators of (\ref{e:harmonic-map}) and (\ref{e:JH}) at $u$ are given by
\begin{eqnarray}
L_u[v]=\Delta v+|\nabla u|^2v+2(\nabla u\cdot\nabla v)u,\notag
\end{eqnarray}
and,
\begin{eqnarray}
L_{1,u}[v]= v_x-v\times u_y-u\times v_y,\notag
\end{eqnarray}
respectively, where $v$ is a smooth map from $\mathbb C$ to $T\mathbb S^2$. Let $C^2_b(\mathbb C,T\mathbb S^2)$ be the space of maps $v$ from $\mathbb C$ to $T\mathbb S^2$ satisfying $\sup_{z\in \mathbb C}(|v(z)|+|\partial v(z)|+|\partial^2 v(z)|)$. Define
\begin{eqnarray}
{\rm ker} L_u:=\{v\in C^2_b(\mathbb C,T\mathbb S^2)\,|\, L_u[v]=0\}\,\, \mbox{and}\,\,{\rm ker} L_{1,u}:=\{v\in C^2_b(\mathbb C,T\mathbb S^2)\,|\, L_{1,u}[v]=0\}.\notag
\end{eqnarray}

\begin{prop}\label{l:Lu-L1u}
Assume that $u$ is a harmonic map with degree $m\ge 0$. It holds that
\begin{eqnarray}\label{e:kernel-1-2}
{\rm ker}L_u={\rm ker}L_{1,u}.
\end{eqnarray}
\end{prop}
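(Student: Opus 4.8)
The plan is to prove the two inclusions separately, with the easy direction being ${\rm ker}\,L_{1,u}\subset {\rm ker}\,L_u$. This should follow from the fact that $u_x-u\times u_y$ is (a multiple of) the gradient of the energy $\mathcal E$, so that $L_u$ factors through $L_{1,u}$ in an appropriate sense. Concretely, I would compute the first variation of the map $u\mapsto u_x-u\times u_y$ at a harmonic map $u$ (which itself satisfies $u_x=u\times u_y$) and check that applying a suitable first-order differential operator to $L_{1,u}[v]$ reproduces $L_u[v]$, possibly modulo terms that vanish because $u$ satisfies \eqref{e:JH}; equivalently, differentiate the identity $\mathcal E(u)=\frac12\int|u_x-u\times u_y|^2+4\pi\,{\rm deg}(u)$ twice. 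Since $u$ is a minimizer in its degree class and ${\rm deg}$ is homotopy-invariant, the second variation of $\mathcal E$ equals the ``square norm'' of $L_{1,u}$, i.e. $\langle L_u[v],v\rangle = -\int |L_{1,u}[v]|^2$ up to boundary terms and tangency corrections; hence $L_{1,u}[v]=0 \Rightarrow \langle L_u[v],v\rangle=0$, and one still needs $L_u[v]=0$ pointwise, which I would get from the stronger operator identity rather than merely the quadratic form.

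For the reverse inclusion ${\rm ker}\,L_u\subset {\rm ker}\,L_{1,u}$ — the substantive direction — I would argue as follows. Take $v\in {\rm ker}\,L_u$, bounded together with two derivatives. Using the factorization/second-variation identity above, $0=\langle L_u[v],v\rangle = -\int_{\mathbb R^2}|L_{1,u}[v]|^2\,dx$ provided the boundary terms at infinity vanish; this is where boundedness of $v,\partial v$ and the decay of $\nabla u$ (which holds since $u$ has finite energy and is a rational map) are used to justify the integration by parts. Once the boundary terms are controlled, $|L_{1,u}[v]|^2$ integrates to zero, forcing $L_{1,u}[v]\equiv 0$, i.e. $v\in {\rm ker}\,L_{1,u}$.

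The main obstacle will be the boundary/decay analysis in the integration by parts: a priori $v\in C^2_b$ only gives boundedness, not decay, so the cross terms $\int_{\partial B_R}(\cdots)$ need not obviously vanish as $R\to\infty$. I would handle this by exploiting that near $z=\infty$ the harmonic map $u=\mathcal S(f)$ with $f$ rational extends smoothly (after composing with the transition chart $1/z$), so $\nabla u$ decays like $|z|^{-2}$ at worst and in fact the whole problem compactifies to $\mathbb S^2$; then the relevant boundary integrand carries a factor of $\nabla u$ and decays fast enough. An alternative, cleaner route is to avoid global integration by parts entirely: establish the pointwise operator identity $L_u = \mathcal D^* L_{1,u}$ on tangent fields (with $\mathcal D$ a first-order operator whose leading symbol is injective), combined with a unique-continuation / elliptic-regularity argument showing that a bounded solution of the first-order overdetermined system generated by $L_u[v]=0$ must actually satisfy $L_{1,u}[v]=0$. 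I expect the cleanest write-up to combine the explicit factorization $L_u[v] = -\big(\tfrac{d}{dx} + u\times\tfrac{d}{dy} + (\text{zeroth order in }\nabla u)\big)L_{1,u}[v]$ with the energy identity to rule out nontrivial bounded solutions of $L_{1,u}[v]\neq 0$ in ${\rm ker}\,L_u$.
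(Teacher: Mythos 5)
Your first inclusion (${\rm ker}\,L_{1,u}\subset{\rm ker}\,L_{u}$) is essentially what the paper does: one differentiates $L_{1,u}[v]=0$ together with its companion identity $v_y+v\times u_x+u\times v_x=0$ in $x$ and $y$, adds, and projects onto the frame $\{u,u_x,u_y\}$ to recover $L_u[v]=0$. No second-variation identity is needed there, and that part of your plan is sound.

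The reverse inclusion is where there is a genuine gap. Your main route is the global quadratic-form identity $\langle L_u[v],v\rangle=-\int|L_{1,u}[v]|^2+(\text{boundary terms})$, and while you correctly flag the boundary terms as the obstacle, your proposed fix does not close it: the very first integration by parts produces $\int_{\partial B_R}\partial_n v\cdot v$, whose integrand carries \emph{no} factor of $\nabla u$, so for $v$ merely in $C^2_b$ it is only $O(R)$ --- and bounded kernel elements genuinely fail to decay (e.g.\ the rotation field $(0,u_3,-u_2)$ tends to a nonzero constant). Worse, $\int_{\mathbb R^2}|L_{1,u}[v]|^2$ need not converge a priori, since the frame decomposition only gives $|L_{1,u}[v]|=O(|\nabla v|)=O(1)$; so the identity you want to exploit may read ``$0=-\infty+\infty$''. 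Your fallback (``unique continuation / elliptic regularity for the overdetermined system'') is a hope rather than an argument. The paper's mechanism is local and scalar: writing $w:=L_{1,u}[v]$, one checks $w\cdot u\equiv 0$, while $w\cdot u_x=u_x\cdot v_x-u_y\cdot v_y$ and $w\cdot u_y=u_x\cdot v_y+u_y\cdot v_x$ are scalar functions which (a) are harmonic whenever $L_u[v]=0$ (this is the long computation using $\Delta u=-|\nabla u|^2u$), and (b) are bounded and tend to zero at infinity because they carry a factor of $\nabla u$, which decays since $u$ is a finite-energy rational map. The scalar Liouville theorem then forces both to vanish, and since $u,u_x,u_y$ span $\mathbb R^3$ almost everywhere when ${\rm deg}(u)>0$, one gets $w\equiv 0$. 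So your instinct that the relevant quantity ``carries a factor of $\nabla u$'' is right, but it must be applied to the pointwise components of $L_{1,u}[v]$ along $u_x$ and $u_y$, not to a global energy integral; to repair the write-up you need to (i) prove harmonicity of those two scalars from $L_u[v]=0$ and (ii) replace the second-variation computation by this Liouville step.
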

\begin{proof}
When $m=0$, (\ref{e:kernel-1-2}) holds. Hence we only consider the case $m>0$.

Since ${\rm deg}(u)>0$ and $\mathcal S^{-1}(u)$ is holomorphic, we find that $|u_x|=|u_y|\ne 0$ almost everywhere in $\mathbb C$.

1. ``${\rm ker}L_{1,u}\Rightarrow {\rm ker}L_{u}$''. From
$v_x-v\times u_y-u\times v_y=0$,
we have that
\begin{eqnarray}
v_y+v\times u_x+u\times v_x=0,\notag
\end{eqnarray}
\begin{eqnarray}
v_{xx}-v_x\times u_y-v\times u_{xy}-u_x\times v_y-u\times v_{xy}=0,\notag
\end{eqnarray}
and
\begin{eqnarray}
v_{yy}+v_y\times u_x+v\times u_{xy}+u_y\times v_x+u\times v_{xy}=0.\notag
\end{eqnarray}
Hence we obtain that
\begin{eqnarray}\label{e:vxxyy}
v_{xx}+v_{yy}-v_x\times u_y+v_y\times u_x-u_x\times v_y+u_y\times v_x=0.
\end{eqnarray}
Since
\begin{eqnarray}
&-(v_x\times u_y)\cdot u=u_x\cdot v_x,\quad (v_y\times u_x)\cdot u=u_y\cdot v_y,\notag\\
&-(u_x\times v_y)\cdot u=u_y\cdot v_y,\quad (u_y\times v_x)\cdot u=u_x\cdot v_x,\notag
\end{eqnarray}
it holds that
\begin{eqnarray}
(v_{xx}+v_{yy})\cdot u+2\nabla u\cdot \nabla v=0.\notag
\end{eqnarray}
Similarly, \eqref{e:vxxyy}$\cdot u_x$ and \eqref{e:vxxyy}$\cdot u_y$  yield that
\begin{eqnarray}
(v_{xx}+v_{yy})\cdot u_x+|\nabla u|^2v\cdot u_x=0,\quad (v_{xx}+v_{yy})\cdot u_y+|\nabla u|^2v\cdot u_y=0.\notag
\end{eqnarray}
Therefore, we obtain that
\begin{eqnarray}
\Delta v+|\nabla u|^2v+2(\nabla u\cdot\nabla v)u=0.\notag
\end{eqnarray}

2. ``${\rm ker}L_{u}\Rightarrow {\rm ker}L_{1,u}$''. We shall prove this by Liouville theorem.
Note that
\begin{eqnarray}\label{e:reduce}
&&(v_x-v\times u_y-u\times v_y)\cdot u=0,\notag\\
&&(v_x-v\times u_y-u\times v_y)\cdot u_x=u_x\cdot v_x-u_y\cdot v_y,\\
&&(v_x-v\times u_y-u\times v_y)\cdot u_y=u_x\cdot v_y+u_y\cdot v_x.\notag
\end{eqnarray}

\emph{Claim: It holds that
\begin{eqnarray}\label{e:uvy}
u_x\cdot v_x-u_y\cdot v_y=0,
\end{eqnarray}
\begin{eqnarray}\label{e:uv-x}
u_x\cdot v_y+u_y\cdot v_x=0.
\end{eqnarray}}

Indeed, we compute $\Delta (u_x\cdot v_x-u_y\cdot v_y)$. By some direct computations, we get that
\begin{eqnarray}
\partial_x(u_x\cdot v_x-u_y\cdot v_y)=u_{xx}\cdot v_x+u_x\cdot v_{xx}-u_{xy}\cdot v_y-u_y\cdot v_{xy},\notag
\end{eqnarray}
\begin{eqnarray}
\partial_x^2(u_x\cdot v_x-u_y\cdot v_y)&=&u_{xxx}\cdot v_x+2u_{xx}\cdot v_{xx}+u_x\cdot v_{xxx}\notag\\
&&-u_{xxy}\cdot v_y-2u_{xy}\cdot v_{xy}-u_{y}\cdot v_{xxy}.\notag
\end{eqnarray}
Similarly, it holds that
\begin{eqnarray}
\partial_y(u_x\cdot v_x-u_y\cdot v_y)=u_{xy}\cdot v_x+u_x\cdot v_{xy}-u_{yy}\cdot v_y-u_y\cdot v_{yy},\notag
\end{eqnarray}
\begin{eqnarray}
\partial_y^2(u_x\cdot v_x-u_y\cdot v_y)&=&u_{xyy}\cdot v_x+2u_{xy}\cdot v_{xy}+u_x\cdot v_{xyy}\notag\\
&&-u_{yyy}\cdot v_y-2u_{yy}\cdot v_{yy}-u_{y}\cdot v_{yyy}.\notag
\end{eqnarray}
Hence we have that
\begin{eqnarray}
\Delta (u_x\cdot v_x-u_y\cdot v_y)&=&(u_{xxx}+u_{xyy})\cdot v_x+2(u_{xx}\cdot v_{xx}-u_{yy}\cdot v_{yy})\notag\\
&&+u_x\cdot (v_{xxx}+v_{xyy})-(u_{xxy}+u_{yyy})\cdot v_y-u_{y}\cdot (v_{xxy}+v_{yyy})\notag\\
&=&\partial_x(\Delta u)\cdot v_x+2((\Delta u)\cdot v_{xx}-u_{yy}\cdot (\Delta v))\notag\\
&&+u_x\cdot \partial_x(\Delta v)-\partial_y((\Delta u)\cdot v_y)-u_{y}\cdot \partial_y(\Delta v),\notag\\
&=&\partial_x((\Delta u)\cdot v_x)-\partial_y (u_{y}\cdot (\Delta v))+(\Delta u)\cdot v_{xx}+u_x\cdot \partial_x(\Delta v)\notag\\
&&-u_{yy}\cdot (\Delta v)-\partial_y(\Delta u)\cdot v_y.\notag
\end{eqnarray}
Further, compute
\begin{eqnarray}
\partial_x((\Delta u)\cdot v_x)=-\partial_x(|\nabla u|^2u\cdot v_x),\notag
\end{eqnarray}
\begin{eqnarray}
-\partial_y((\Delta u)\cdot v_y)=\partial_y(|\nabla u|^2 u\cdot v_y),\notag
\end{eqnarray}
\begin{eqnarray}
&&(\Delta u)\cdot v_{xx}+u_x\cdot \partial_x(\Delta v)\notag\\
&=&-|\nabla u|^2 u\cdot v_{xx}+\partial_x[u_x\cdot (\Delta v)]-u_{xx}\cdot (\Delta v)\notag\\
&=&-|\nabla u|^2 u\cdot v_{xx}-\partial_x[u_x\cdot (|\nabla u|^2v+2(\nabla u\cdot \nabla v)u)]
+u_{xx}\cdot [|\nabla u|^2v+2(\nabla u\cdot \nabla v)u]\notag\\
&=&-|\nabla u|^2 u\cdot v_{xx}-\partial_x[|\nabla u|^2v \cdot u_x]+|\nabla u|^2v\cdot u_{xx}+2(\nabla u\cdot \nabla v)u\cdot u_{xx},\notag
\end{eqnarray}
and
\begin{eqnarray}
&&-u_{yy}\cdot (\Delta v)-\partial_y(\Delta u)\cdot v_y\notag\\
&=&u_{yy}\cdot (|\nabla u|^2v+2(\nabla u\cdot \nabla v)u)-\partial_y[(\Delta u)\cdot v_y]+(\Delta u)\cdot v_{yy}\notag\\
&=&|\nabla u|^2v\cdot u_{yy}+2(\nabla u\cdot \nabla v)u\cdot u_{yy}+\partial_y[|\nabla u|^2u\cdot v_y]-|\nabla u|^2u\cdot v_{yy}.\notag
\end{eqnarray}
Moreover, note that
\begin{eqnarray}
\partial_x(|\nabla u|^2u\cdot v_x)+\partial_x[|\nabla u|^2v \cdot u_x]=0,\notag
\end{eqnarray}
\begin{eqnarray}
\partial_y(|\nabla u|^2 u\cdot v_y)+\partial_y[|\nabla u|^2u\cdot v_y]=0,\notag
\end{eqnarray}
\begin{eqnarray}
-|\nabla u|^2 u\cdot (v_{xx}+v_{yy})=|\nabla u|^2 u\cdot [|\nabla u|^2v+2(\nabla u\cdot \nabla v)u]=2|\nabla u|^2 (\nabla u\cdot \nabla v),\notag
\end{eqnarray}
\begin{eqnarray}
|\nabla u|^2v\cdot [u_{xx}+u_{yy}]=-|\nabla u|^2v\cdot [|\nabla u|^2u]=0,\notag
\end{eqnarray}
\begin{eqnarray}
2(\nabla u\cdot \nabla v)u\cdot [u_{xx}+u_{yy}]=-2|\nabla u|^2(\nabla u\cdot \nabla v).\notag
\end{eqnarray}
Therefore, we have that
\begin{eqnarray}
\Delta (u_x\cdot v_x-u_y\cdot v_y)=0.\notag
\end{eqnarray}
Hence $u_x\cdot v_x-u_y\cdot v_y$ is a harmonic function. Since $v\in C^{1}(\mathbb C, T\mathbb S^2)$ and $\mathcal E(u)<\infty$, the claim \eqref{e:uvy} holds.

Similar computations yield
\begin{eqnarray}
\Delta (u_x\cdot v_y+u_y\cdot v_x)=0.\notag
\end{eqnarray}
Hence we have (\ref{e:uv-x}).

Summarizing \eqref{e:reduce} and the claim, we obtain that
\begin{eqnarray}
v_x-v\times u_y-u\times v_y=0.\notag
\end{eqnarray}
This completes the proof.
\end{proof}

We now compute the tangent map $D\mathcal S^{-1}$. Let $u=(u_1,u_2,u_3)\in\mathbb S^2$. Then
\begin{eqnarray}
\mathcal S^{-1}(u)=\left(\frac{u_1}{1-u_3},\frac{u_2}{1-u_3}\right)=\frac{u_1+iu_2}{1-u_3}.\notag
\end{eqnarray}
Let $u$ be a harmonic map with degree $m\ge 0$. Let $u(t)$ ($t\in [0,\epsilon)$) be a family of harmonic maps with $u'(0)=v$. Hence $v$ is a map from $\mathbb C$ to $T\mathbb S^2$ with $u\cdot v=0$.
Direct calculations yield that
\begin{eqnarray}
D\mathcal S^{-1}_{u}(v)=\left.\frac{d}{dt}\mathcal S^{-1}(u(t))\right|_{t=0}
=\left(\frac{u_1v_3-u_3v_1+v_1}{(1-u_3)^2}, \frac{u_2v_3-u_3v_2+v_2}{(1-u_3)^2}\right).\notag
\end{eqnarray}

On the other hand, assume that $u=\mathcal S(f)$ where $f(z)=f_1(z)+if_2(z)$ is a complex (irreducible) function, $g(z)=g_1(z)+ig_2(z)$. Then the tangent map of $\mathcal S$ is given by
\begin{eqnarray}\label{e:ds-holomorphic}
D\mathcal S_f(g)=\left(\begin{array}{c}
                2\dfrac{-f_1^{2}+f_2^{2}+1}{\left( f_1^{2}+f_2^{2}+1\right) ^{2}}g_1-\dfrac{4f_1f_2}{\left( f_1^{2}+f_2^{2}+1\right) ^{2}}g_2 \\
                 2\dfrac{f_1^{2}-f_2^{2}+1}{\left( f_1^{2}+f_2^{2}+1\right) ^{2}}g_2-\dfrac{4f_1f_2}{\left( f_1^{2}+f_2^{2}+1\right) ^{2}}g_1 \\
                \dfrac{4f_1}{\left( f_1^{2}+f_2^{2}+1\right) ^{2}}g_1+\dfrac{4f_2}{\left( f_1^{2}+f_2^{2}+1\right) ^{2}}g_2
              \end{array}\right).
\end{eqnarray}
We should point out that if $g$ is holomorphic with large orders of singularities, then $D\mathcal S_f(g)$ becomes unbounded. Whereas, if the order of $g$ is small but still larger than $f$, $D\mathcal S_f(g)$ may be bounded. Let us consider the following examples.

\begin{example}
1. Let $u(\cdot,a,b)=\mathcal S(\frac{1}{z-(a+ib)})$. Then
\begin{eqnarray}
u(z,a,b)=\left(
         \begin{array}{c}
           \dfrac{2(x-a)}{a^{2}-2ax+b^{2}-2by+x^{2}+y^{2}+1} \\
           -\dfrac{2(y-b)}{a^{2}-2ax+b^{2}-2by+x^{2}+y^{2}+1} \\
           -\dfrac{a^{2}-2ax+b^{2}-2by+x^{2}+y^{2}-1}{%
a^{2}-2ax+b^{2}-2by+x^{2}+y^{2}+1}\\
         \end{array}
       \right).\notag
\end{eqnarray}
Letting $a=b=0$, we find that
\begin{eqnarray}
v(z)=\left.\frac{d}{da}u(z;a,b)\right|_{a=0,b=0}=\left(
                     \begin{array}{c}
                       -2\dfrac{-x^{2}+y^{2}+1}{\left(
x^{2}+y^{2}+1\right) ^{2}} \\
                       \dfrac{-4xy}{\left(
x^{2}+y^{2}+1\right) ^{2}} \\
                       \dfrac{4x}{\left(x^{2}+y^{2}+1\right)
^{2}} \\
                     \end{array}
                   \right),\notag
\end{eqnarray}
The generate function of $\frac{d}{da}u(z;0,0)$ is
\begin{eqnarray}
D\mathcal S^{-1}_u(v(z))=\frac{x^{2}-y^{2}}{\left(
x^{2}+y^{2}\right) ^{2}}+i( \dfrac{-2x y}{\left(x^{2}+y^{2}\right) ^{2}})=\dfrac{1}{z^2}.\notag
\end{eqnarray}

2. Let $u(\cdot,a,b)=\mathcal S(\frac{1}{(z-(a+ib))^2})$. Direct calculations yield that
\begin{eqnarray}
v(z)=\left.\frac{d}{da}u(z,a)\right|_{a=0,b=0}=\left(
                            \begin{array}{c}
                              \dfrac{-4x(-x^{4}+2x^{2}y^{2}+3y^{4}+1)}{(x^{4}+2x^{2}y^{2}+y^{4}+1)^2} \\
                              \dfrac{-4y(3x^{4}+2x^{2}y^{2}-y^{4}-1)}{(x^{4}+2x^{2}y^{2}+y^{4}+1)^2} \\
                              \dfrac{8x(x^{2}+y^{2})}{(x^{4}+2x^{2}y^{2}+y^{4}+1)^2}\\
                            \end{array}
                          \right).\notag
\end{eqnarray}
Hence
\begin{eqnarray}
D\mathcal S^{-1}_u(v(z))=\dfrac{2x^{3}-6xy^{2}}{\left( x^{2}+y^{2}\right) ^{3}}+i\dfrac{2y^{3}-6x^{2}y}{\left( x^{2}+y^{2}\right) ^{3}}=\dfrac{2}{z^3}.\notag
\end{eqnarray}

3. Let $u=\mathcal S(f)$ where $f$ is a complex rational function. Compute
\begin{eqnarray}
D\mathcal S^{-1}_u(v)&=&\frac{u_1u_2+i(1-u_3-u_1^2)}{(1-u_3)^2}\notag\\
&=&f_1f_2+\frac{i}{2}(f_2^2-f_1^2+1)=-\frac{i}{2}f^2+\frac{i}{2}.\notag
\end{eqnarray}

\end{example}

We turn to compute the kernel of the linearized operators $L_{1,u}$.
\begin{prop}\label{p:cr}
Assume that $u=\mathcal S(f)$ is a harmonic map with degree $m\ge 0$, where a irreducible rational function is given by
$f(z)=\frac{q(z)}{p(z)}.$
Here $p,\,q$ are two polynomials with order $l,\,n$, respectively.
Then all bounded kernel maps of ${\rm ker}L_{1,u}$ are generated by changing of coefficients of $p$ and $q$.
In particular,
the real dimension of ${\rm ker}L_{1,u}$ is
$4m+2$.
\end{prop}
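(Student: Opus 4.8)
The plan is to show that $\ker L_{1,u}$ is exactly the space of infinitesimally holomorphic deformations of $u$. By Proposition~\ref{l:Lu-L1u} we may work with $L_{1,u}$. Write $f=q/p$ with $\gcd(p,q)=1$, $\deg p=l$, $\deg q=n$, $\max\{l,n\}=m$, and let $z_1,\dots,z_K$ be the zeros of $p$ with multiplicities $j_1,\dots,j_K$, so $f$ is holomorphic on $\mathbb C\setminus\{z_1,\dots,z_K\}$ with a pole of order $j_k$ at $z_k$. Since $D\mathcal S_{f(z)}\colon\mathbb C\to T_{u(z)}\mathbb S^2$ is a linear isomorphism wherever $u(z)\ne N$, any $v\in C^2_b(\mathbb C,T\mathbb S^2)$ with $u\cdot v=0$ can be written on $\mathbb C\setminus\{z_1,\dots,z_K\}$ as $v=D\mathcal S_f(g)$ with $g:=D\mathcal S^{-1}_u(v)$ smooth there. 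The first step I would carry out is the (mechanical) computation showing that, because $f_{\bar z}=0$ and $u$ satisfies the first-order equation \eqref{e:JH}, the equation $L_{1,u}[v]=v_x-v\times u_y-u\times v_y=0$ is equivalent to the Cauchy--Riemann equation $g_{\bar z}=0$: one substitutes $v=D\mathcal S_f(g)$ via \eqref{e:ds-holomorphic}, differentiates by the chain rule, and checks that the holomorphicity of $f$ cancels everything except a nonvanishing multiple of $D\mathcal S_f(g_{\bar z})$. (Equivalently, one can start from the identities $u_x\cdot v_x-u_y\cdot v_y=0$ and $u_x\cdot v_y+u_y\cdot v_x=0$ obtained in the proof of Proposition~\ref{l:Lu-L1u}, which already say $\partial_z u\cdot\partial_z v=0$ in $\mathbb C^3$, to reach the same conclusion.) Thus bounded elements of $\ker L_{1,u}$ correspond bijectively to functions $g$ holomorphic on $\mathbb C\setminus\{z_1,\dots,z_K\}$ for which $v=D\mathcal S_f(g)$ is bounded on $\mathbb C$.

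The second step identifies those $g$. Since $\mathcal S$ is conformal, $D\mathcal S_{w}$ scales lengths by $2/(1+|w|^2)$, hence $|v(z)|=2|g(z)|/(1+|f(z)|^2)$ and $v$ is bounded iff $|g|\le C(1+|f|^2)$ on $\mathbb C$. Near $z_k$ one has $1+|f|^2\asymp|z-z_k|^{-2j_k}$, so $g$ has at worst a pole of order $2j_k$ there and $p^2g$ is entire. As $z\to\infty$, $1+|f|^2\asymp 1+|z|^{2(n-l)}$, so $|p^2g|\le C(|p|^2+|q|^2)=O(|z|^{2m})$, whence $p^2g$ is a polynomial $P$ with $\deg P\le 2m$. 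Conversely, for any $P$ with $\deg P\le 2m$ the function $g=P/p^2$ is holomorphic off $\{z_k\}$, and $|D\mathcal S_f(g)|=2|P|/(|p|^2+|q|^2)$ is bounded because $|p|^2+|q|^2\gtrsim 1+|z|^{2m}$ (as $p,q$ have no common zero and $\max\{l,n\}=m$); one then also checks $D\mathcal S_f(P/p^2)\in C^2_b$. Hence
\[
\ker L_{1,u}=\{\,D\mathcal S_f(P/p^2)\ :\ P\in\mathbb C[z],\ \deg P\le 2m\,\},
\]
which has complex dimension $2m+1$, i.e.\ real dimension $4m+2$. (This agrees with $h^0(\mathbb{CP}^1,u^*T\mathbb{CP}^1)=h^0(\mathbb{CP}^1,\mathcal O(2m))$, a useful consistency check.)

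The last step recognizes this as the space swept out by varying the coefficients of $p$ and $q$. Parametrizing numerator and denominator by polynomials of degree $\le m$, set $\Phi(\tilde p,\tilde q)=\mathcal S(\tilde q/\tilde p)$; its differential at $(p,q)$ is $d\Phi(\dot p,\dot q)=D\mathcal S_f\!\big(\tfrac{\dot q\,p-q\,\dot p}{p^2}\big)$. The linear map $(\dot p,\dot q)\mapsto\dot q\,p-q\,\dot p$ takes values in the $(2m+1)$-dimensional space of polynomials of degree $\le 2m$, and its kernel consists of $(\dot p,\dot q)$ with $\dot q\,p=q\,\dot p$; since $\gcd(p,q)=1$ this forces $p\mid\dot p$, say $\dot p=pr$, then $\dot q=qr$, and $\deg r\le m-\max\{\deg p,\deg q\}=0$, so $(\dot p,\dot q)=c(p,q)$ — the one-dimensional scaling direction, which leaves $f$ unchanged. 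Therefore $(\dot p,\dot q)\mapsto\dot q\,p-q\,\dot p$ is onto, so $d\Phi$ has image exactly $\ker L_{1,u}$: every bounded element of $\ker L_{1,u}$ is generated by an infinitesimal change of coefficients of $p$ and $q$, and $\dim_{\mathbb R}\ker L_{1,u}=4m+2$.

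I expect the main obstacle to be the computation in the first step — pushing the vector identity $v_x-v\times u_y-u\times v_y=0$ through the explicit stereographic formula \eqref{e:ds-holomorphic} and seeing the holomorphicity of $f$ leave exactly $\bar\partial g=0$. A secondary point requiring care is the behaviour at the poles $z_k$ of $f$, where the chart $\mathcal S$ degenerates: to justify rigorously that $g$ is meromorphic at $z_k$ (and that $p^2g$ is entire), one should rerun the local analysis in the chart $\mathcal S'$, equivalently study the deformation near $z_k$ in terms of $1/f$.
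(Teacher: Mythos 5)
Your proposal is correct, and its skeleton coincides with the paper's: both arguments reduce the problem to showing that $v\in{\rm ker}L_{1,u}$ forces $g=D\mathcal S^{-1}_u(v)$ to satisfy the Cauchy--Riemann equations away from the poles of $f$ (the paper carries out this vector computation explicitly in its Step 2; you defer it as a ``mechanical computation'', which is the one item you still owe, together with the chart change to $\mathcal S'$ near the poles that you correctly flag), and then to matching the admissible holomorphic $g$ with coefficient variations of $p$ and $q$. Where you genuinely diverge is in the completeness and counting step. The paper's Step 1 writes down the explicit kernel elements $v^{i,a_j}$, $v^{i,b_k}$, $v^{i,c_\nu}$ by differentiating a concrete family of rational maps, and then disposes of the converse inclusion in a single sentence in Step 3, leaving implicit the argument that boundedness of $v$ pins down $g$. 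You supply exactly that missing argument: conformality of $\mathcal S$ gives $|v|=2|g|/(1+|f|^2)$, from which you bound the pole order of $g$ at each zero of $p$ and its growth at infinity to conclude $g=P/p^2$ with $\deg P\le 2m$, and you then check that $(\dot p,\dot q)\mapsto \dot q\,p-q\,\dot p$ maps onto the polynomials of degree $\le 2m$ with kernel the scaling direction $c(p,q)$. This route buys a clean, chart-free identification of ${\rm ker}L_{1,u}$ with a $(2m+1)$-dimensional complex space (hence real dimension $4m+2$) without the case analysis on whether $l=m$ or $n=m$, and it makes rigorous the upper bound on the dimension that the paper only asserts; the paper's enumeration, in exchange, produces the kernel maps in the explicit form later used for Corollary \ref{t:main}.
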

\begin{proof}
1. Assume ${\rm deg}(u)=m$. Then $m=l$ or $m=n$. We first consider $l=m$. Assume that
\begin{eqnarray}
p(z)=a_0z^m+a_1z^{m-1}+\cdots+a_{m-1}z+a_m,\quad q(z)=b_0z^n+b_1z^{n-1}+\cdots+b_{n-1}z+b_n,\notag
\end{eqnarray}
where $a_0,\cdots,a_m$ and $b_0,\cdots,b_n$ are complex numbers with $a_0\ne 0$ and $b_0\ne 0$. Let
\begin{eqnarray}
a_j=a_j^1+ia_j^2,\quad j=0,\cdots,m,\notag
\end{eqnarray}
\begin{eqnarray}
b_k=b_k^1+ib_k^2,\quad k=0,\cdots,n,\notag
\end{eqnarray}
where $a_j^1, a_j^2$ ($j=0,\cdots,m$) and $b_k^1,b_k^2$ ($k=0,\cdots,n$) are real numbers.
Set
\begin{eqnarray}
&&F(a_0,\cdots,a_m;b_0,\cdots,b_n;c_1,\cdots,c_{m-n})\notag\\
&=&\frac{c_1z^m+\cdots+c_{m-n}z^{n+1}+b_0z^n+b_1z^{n-1}+\cdots+b_{n-1}z+b_n}{a_0z^m+a_1z^{m-1}+\cdots+a_{m-1}z+a_m},\notag
\end{eqnarray}
where $c_1=c_1^1+ic_1^2,\cdots, c_{m-n}=c_{m-n}^1+ic_{m-n}^2$ are complex numbers.
Hence
\begin{eqnarray}\label{e:harmonic-family}
w(a_0,\cdots,a_m;b_0,\cdots,b_n;c_1,\cdots,c_{m-n}):=\mathcal S(F(a_0,\cdots,a_m;b_0,\cdots,b_n;c_1,\cdots,c_{m-n}))
\end{eqnarray}
is a family of harmonic maps with
$$\mathcal S(F(a_0,\cdots,a_m;b_0,\cdots,b_n;0,\cdots,0))=u.$$
Taking derivative on $w$ with respect to the real and imaginary part of the complex coefficients, from (\ref{e:ds-holomorphic}) we fine the following bounded kernels
\begin{eqnarray}
v^{1,c_{\nu}}(z)=D\mathcal S_u\left[\frac{z^{m-\nu+1}}{a_0z^m+a_1z^{m-1}+\cdots+a_{m-1}z+a_m}\right], \notag
\end{eqnarray}
\begin{eqnarray}
v^{2,c_{\nu}}(z)=D\mathcal S_u\left[i\frac{z^{m-\nu+1}}{a_0z^m+a_1z^{m-1}+\cdots+a_{m-1}z+a_m}\right],\notag
\end{eqnarray}
with $\nu=1,\cdots, m-n$, and
\begin{eqnarray}
v^{1,b_{k}}(z)=D\mathcal S_u\left[\frac{z^{n-k}}{a_0z^m+a_1z^{m-1}+\cdots+a_{m-1}z+a_m}\right],\notag
\end{eqnarray}
\begin{eqnarray}
v^{2,b_{k}}(z)=D\mathcal S_u\left[i\frac{z^{n-k}}{a_0z^m+a_1z^{m-1}+\cdots+a_{m-1}z+a_m}\right],\notag
\end{eqnarray}
with $k=0,\cdots,n$, and
\begin{eqnarray}
v^{1,a_{j}}(z)=D\mathcal S_u\left[-\frac{z^{m-j}}{(a_0z^m+a_1z^{m-1}+\cdots+a_{m-1}z+a_m)^2}\right],\notag
\end{eqnarray}
\begin{eqnarray}
v^{2,a_{j}}(z)=D\mathcal S_u\left[-i\frac{z^{m-j}}{(a_0z^m+a_1z^{m-1}+\cdots+a_{m-1}z+a_m)^2}\right],\notag
\end{eqnarray}
with $j=0,\cdots,m$. Note that $v^{1,b_{k}}$, $v^{2,b_{k}}$ and can be linearly represented by $v^{1,a_{0}},\cdots,v^{1,a_{m}},v^{2,a_{0}},\cdots,v^{2,a_{m}}$. Therefore, we obtain that
\begin{eqnarray}
{\rm span}\left\{\begin{array}{c}
v^{1,c_{1}},\cdots,v^{1,c_{m-n}};
v^{2,c_{1}},\cdots,v^{2,c_{m-n}} \\
v^{1,b_{0}},\cdots, v^{1,b_{n-1}};
v^{2,b_{0}},\cdots, v^{2,b_{n-1}} \\
v^{1,a_{0}},\cdots, v^{1,a_{m}};
v^{2,a_{0}},\cdots, v^{2,a_{m}}
\end{array}
 \right\}\subset{\rm ker}L_{1,u}.\notag
\end{eqnarray}

Similarly, we can compute the bounded kernels generated by the harmonic maps near $u$ of form (\ref{e:harmonic-family}) for $n=m$.

2. Next, we prove that for all $v\in {\rm ker}L_{1,u}$, $D\mathcal S^{-1}_u(v)$ satisfies Cauchy-Riemann equation in $\mathbb C\setminus\{z_1,\cdots,z_K\}$ where $z_1,\cdots,z_K$ are poles of $f$. That is, we want to verify that
\begin{eqnarray}
\left\{\begin{array}{c}
         \partial_x\left(\dfrac{u_1v_3-u_3v_1+v_1}{(1-u_3)^2}\right)=\partial_y\left(\dfrac{u_2v_3-u_3v_2+v_2}{(1-u_3)^2}\right) \\
         \partial_y\left(\dfrac{u_1v_3-u_3v_1+v_1}{(1-u_3)^2}\right)=-\partial_x\left(\dfrac{u_2v_3-u_3v_2+v_2}{(1-u_3)^2}\right)
       \end{array}
\right..\notag
\end{eqnarray}
That is,
\begin{eqnarray}
\left\{\begin{array}{l}
         \partial_x\left(\dfrac{(v\times u)_2+v_1}{(1-u_3)^2}\right)=\partial_y\left(\dfrac{(u\times v)_1+v_2}{(1-u_3)^2}\right) \\
         \partial_y\left(\dfrac{(v\times u)_2+v_1}{(1-u_3)^2}\right)=-\partial_x\left(\dfrac{(u\times v)_1+v_2}{(1-u_3)^2}\right)
       \end{array}
\right..\notag
\end{eqnarray}
So it is sufficient to check that
\begin{eqnarray}\label{e:cr2}
\left\{\begin{array}{l}
         \partial_x[(v\times u)_2+v_1](1-u_3)+2[(v\times u)_2+v_1]\partial_x u_3\\
         \quad\quad=\partial_y[(u\times v)_1+v_2](1-u_3)+2[(u\times v)_1+v_2]\partial_y u_3\\
        \partial_y[(v\times u)_2+v_1](1-u_3)+2[(v\times u)_2+v_1]\partial_y u_3\\
        \quad\quad=-\partial_x[(u\times v)_1+v_2](1-u_3)-2[(u\times v)_1+v_2]\partial_x u_3\\
       \end{array}
\right.
\end{eqnarray}

In fact, note that
\begin{eqnarray}\label{e:uxuy}
u_x=u\times u_y\quad \mbox{ and }\quad u_y=-u\times u_x.
\end{eqnarray}
Moreover, since $v\in {\rm ker}L_{1,u}$, we have that
\begin{eqnarray}
v_x-v\times u_y-u\times v_y=0,\quad
v_y+v\times u_x+u\times v_x=0.\notag
\end{eqnarray}
Using Lagrange identity and Jacobi identity for cross product, we obtain that
\begin{eqnarray}
\partial_x[(v\times u)_2+v_1]&=&(v_x\times u)_2+(v\times u_x)_2+(v_1)_x\notag\\
&=&2(v\times u_x)_2+(v_2)_y+(v_1)_x,\notag
\end{eqnarray}
\begin{eqnarray}
\partial_y[(u\times v)_1+v_2]&=&(u_y\times v)_1+(u\times v_y)_1+(v_2)_y\notag\\
&=&2(u_y\times v)_1+(v_1)_x+(v_2)_y.\notag
\end{eqnarray}
Hence
\begin{eqnarray}
&&\partial_x[(v\times u)_2+v_1](1-u_3)+2[(v\times u)_2+v_1]\partial_x u_3\notag\\
        && \quad\quad-\partial_y[(u\times v)_1+v_2](1-u_3)-2[(u\times v)_1+v_2]\partial_y u_3\notag\\
        &=&2[(v\times u_x)_2-(u_y\times v)_1](1-u_3)+2[(v\times u)_2+v_1]\partial_x u_3-2[(u\times v)_1+v_2]\partial_y u_3\notag.
\end{eqnarray}
Further,
\begin{eqnarray}
&&[(v\times u_x)_2-(u_y\times v)_1](1-u_3)\notag\\
&=&[(1-u_3)(-(u_3)_x)]v_1+[(1-u_3)(-(u_3)_y)]v_2+[((u_1)_x-(u_2)_y)(1-u_3)]v_3,\notag
\end{eqnarray}
and
\begin{eqnarray}
&&[(v\times u)_2+v_1]\partial_x u_3-[(u\times v)_1+v_2]\partial_y u_3\notag\\
&=&(1-u_3)(u_3)_x v_1+(1-u_3)(u_3)_y v_2 +(u_1 (u_3)_x-u_2 (u_3)_y)v_3.\notag
\end{eqnarray}
By (\ref{e:uxuy}), we find that
\begin{eqnarray}
&&((u_1)_x-(u_2)_y)(1-u_3)+u_1 (u_3)_x-u_2 (u_3)_y\notag\\
&=&(u_1)_x-(u_2)_y-(u_1)_xu_3+(u_2)_yu_3+u_1 (u_3)_x-u_2 (u_3)_y\notag\\
&=&(u_1)_x-(u_2)_y-(u\times u_y)_1-(u\times u_x)_2=0.\notag
\end{eqnarray}
Similar computations yield the second equation of (\ref{e:cr2}).

3. By using Equation (\ref{e:ds-holomorphic}) and Step 1 and 2, we have
all bounded kernels must be generated by harmonic maps near $u$.
This completes the proof.
\end{proof}

\begin{remark}
For the harmonic maps with negative degree, replacing holomorphic maps by anti-holomorphic ones, we can show the nondegeneracy similarly.
\end{remark}

\begin{proof}[Proof of Theorem \ref{t:general}]
Combining Proposition \ref{l:Lu-L1u} and Proposition \ref{p:cr}, we obtain the result in Theorem \ref{t:general}.
\end{proof}

As a consequence of Theorem \ref{t:general}, we can give an explicit formula for the bounded kernel of linearized operator.
\begin{proof}[Proof of Corollary \ref{t:main}]
Let $g_{k}(z)=z^{m-k}$, $k=0,1,\cdots,m$.
From (\ref{e:ds-holomorphic}),
\begin{eqnarray}
D\mathcal S_{U_m}(ig_k)=-2E_{k1},\quad D\mathcal S_{U_m}(g_k)=-2E_{k2}.\notag
\end{eqnarray}
They are generated by the families of form $u_t(z)=z^m+tz^{m-k}$, $t\in \mathbb C$.
Let $\tilde g_{\nu}(z)=z^{m+\nu}$ ($\nu=1,\cdots, m$). Then
\begin{eqnarray}
D\mathcal S_{U_m}(i\tilde g_\nu)=-2\tilde E_{\nu1},\quad D\mathcal S_{U_m}(\tilde g_\nu)=-2\tilde E_{\nu2}.\notag
\end{eqnarray}
These kernels are generated by $u_t(z)=\frac{z^m}{tz^{\nu}+1}$, $t\in \mathbb C$.
\end{proof}

\appendix

\section{}
In this appendix, we give some details of computations and an alternative proof of Corollary \ref{t:main} by ODE method.
\begin{lemma}\label{l:first-cr}
Map $u$ satisfies (\ref{e:JH}) if and only if
\begin{eqnarray}\label{e:cr}
\mathcal S^{-1}(u)=\dfrac{u_1+iu_2}{1-u_3}
\end{eqnarray}
satisfies Cauchy-Riemann equations in $\mathbb C\setminus\mathcal S^{-1}(N)$.
\end{lemma}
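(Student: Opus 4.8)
The plan is to reduce both implications to a single algebraic identity obtained by a direct computation. I would write $f:=\mathcal S^{-1}(u)=f_1+if_2$ with $f_1=\dfrac{u_1}{1-u_3}$ and $f_2=\dfrac{u_2}{1-u_3}$, which is smooth on $\mathbb C\setminus\mathcal S^{-1}(N)$ since there $1-u_3>0$, and then differentiate by the quotient rule to get
\[
\partial_x f_i=\frac{u_{i,x}(1-u_3)+u_i\,u_{3,x}}{(1-u_3)^2},\qquad \partial_y f_i=\frac{u_{i,y}(1-u_3)+u_i\,u_{3,y}}{(1-u_3)^2}\quad(i=1,2).
\]
Hence the two Cauchy--Riemann defects $D_1:=\partial_x f_1-\partial_y f_2$ and $D_2:=\partial_x f_2+\partial_y f_1$ are rational expressions in $u$ and $\nabla u$ with denominator $(1-u_3)^2$. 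Holomorphy of $f$ on $\mathbb C\setminus\mathcal S^{-1}(N)$ is precisely $D_1\equiv D_2\equiv 0$, while \eqref{e:JH} is precisely $P\equiv 0$, where $P:=u_x-u\times u_y\in\mathbb R^3$; so the whole statement comes down to relating $D_1,D_2$ to $P$.

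The key step is to substitute $u_x=P+u\times u_y$ componentwise into the numerators of $D_1$ and $D_2$ and simplify, using only the pointwise constraints forced by $|u|^2\equiv 1$, namely $u\cdot u_x=u\cdot u_y=0$, together with the elementary cross-product formulas $(u\times u_y)_1=u_2u_{3,y}-u_3u_{2,y}$, and so on. After simplification, all terms not containing $P$ cancel, leaving
\[
D_1=\frac{(1-u_3)P_1+u_1P_3}{(1-u_3)^2},\qquad D_2=\frac{(1-u_3)P_2+u_2P_3}{(1-u_3)^2}.
\]
This cancellation is the only genuine computation in the proof and the step where care is needed; everything else is formal.

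Granting these two identities, the equivalence is immediate. If $u$ satisfies \eqref{e:JH}, then $P\equiv 0$, hence $D_1\equiv D_2\equiv 0$ and $f$ is holomorphic. Conversely, assume $D_1\equiv D_2\equiv 0$. Since $|u|^2\equiv 1$ also gives $u\cdot P=u\cdot u_x-u\cdot(u\times u_y)=0$ pointwise, the triple $(P_1,P_2,P_3)$ solves the homogeneous linear system with coefficient matrix
\[
\begin{pmatrix}1-u_3 & 0 & u_1\\ 0 & 1-u_3 & u_2\\ u_1 & u_2 & u_3\end{pmatrix},
\]
whose determinant equals $(1-u_3)\bigl((1-u_3)u_3-u_1^2-u_2^2\bigr)=(1-u_3)(u_3-1)=-(1-u_3)^2$, which is nonzero on $\mathbb C\setminus\mathcal S^{-1}(N)$. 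Therefore $P\equiv 0$, i.e. $u$ satisfies \eqref{e:JH}, and the map \eqref{e:cr} satisfies the Cauchy--Riemann equations there.

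A shorter but less self-contained alternative would be to use conformality: $D\mathcal S^{-1}_u$ is a conformal linear isomorphism $T_u\mathbb S^2\to\mathbb C$ carrying $w\mapsto -u\times w$ to multiplication by $i$, so holomorphy of $f=\mathcal S^{-1}\circ u$ reads $D\mathcal S^{-1}_u(u_y)=i\,D\mathcal S^{-1}_u(u_x)$, that is $u_y=-u\times u_x$, which is equivalent to \eqref{e:JH}. However, verifying this intertwining property requires essentially the same computation as above, so I would favour the direct route, with the bookkeeping in the second paragraph being the main (and only) obstacle.
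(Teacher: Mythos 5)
Your proof is correct, and the identities $D_i=\bigl((1-u_3)P_i+u_iP_3\bigr)/(1-u_3)^2$ do check out using only $|u|^2\equiv 1$ (hence $u\cdot u_x=u\cdot u_y=0$), with the determinant $-(1-u_3)^2$ computed correctly. This is essentially the paper's argument — a direct componentwise computation relating the Cauchy--Riemann defects to $u_x-u\times u_y$ — the only (cosmetic) difference being that you close the converse by inverting the $3\times 3$ linear system for $P$, whereas the paper applies $D\mathcal S^{-1}_u$ to both $u_x$ and $u\times u_y$ and observes the images coincide.
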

\begin{proof}
1. The equation (\ref{e:JH}) is equivalent to
\begin{eqnarray}
\left\{\begin{array}{c}
         \partial_{x}u_1 = u_2\partial_{y}u_3-u_3\partial_{y}u_2\\
         \partial_{x}u_2 = u_3\partial_{y}u_1-u_1\partial_{y}u_3 \\
         \partial_{x}u_3 = u_1\partial_{y}u_2-u_2\partial_{y}u_2
       \end{array}
\right.\quad \mbox{and}\quad u_{y}=-u\times u_{x}.\notag
\end{eqnarray}
Hence
\begin{eqnarray}
\left\{\begin{array}{c}
         \partial_{y}u_1 = u_3\partial_{x}u_2-u_2\partial_{x}u_3\\
         \partial_{y}u_2 = u_1\partial_{x}u_3-u_3\partial_{x}u_1 \\
         \partial_{y}u_3 = u_2\partial_{x}u_2-u_1\partial_{x}u_2
       \end{array}
\right..\notag
\end{eqnarray}
Therefore we find that
\begin{eqnarray}\label{e:Cauchy-Riemann}
\left\{\begin{array}{c}
                \dfrac{\partial_x u_1 + u_1\partial_x u_3-u_3\partial_x u_1}{(1-u_3)^2}=\dfrac{\partial_y u_2 + u_2\partial_y u_3-u_3\partial_{ x_2} u_2}{(1-u_3)^2} \\
                \dfrac{\partial_y u_1 + u_1\partial_y u_3-u_3\partial_y u_1}{(1-u_3)^2}=-\dfrac{\partial_x u_2 + u_2\partial_x u_3-u_3\partial_x u_2}{(1-u_3)^2}
              \end{array}.\right.
\end{eqnarray}
This yields that (\ref{e:cr}) satisfies Cauchy-Riemann equations in $\mathbb C\setminus\mathcal S^{-1}(N)$.

2. Conversely, Cauchy-Riemann equation yields (\ref{e:Cauchy-Riemann}).
Then after a stereographic projection,
\begin{eqnarray}
  D\mathcal S^{-1}_{u}(u_{x})= \left(
                                   \begin{array}{c}
                                     \dfrac{u_1\partial_{x}u_3-u_3\partial_{x}u_1+\partial_{x}u_1}{(1-u_3)^2} \\
                                     \dfrac{u_2\partial_{x}u_3-u_3\partial_{x}u_2+\partial_{x}u_2}{(1-u_3)^2} \\
                                   \end{array}
                                 \right),\notag
\end{eqnarray}
and
\begin{eqnarray}
&&D\mathcal S^{-1}_{u}(u\times u_{y})\notag\\
&=&\left(
\begin{array}{c}
\dfrac{u_1(u_1\partial_{y}u_2-u_2\partial_yu_1)-u_3(u_2\partial_yu_3-u_3\partial_yu_2)+u_2\partial_yu_3 - u_3\partial_yu_2}{(1-u_3)^2}\\
\dfrac{u_2(u_1\partial_yu_2-u_2\partial_yu_1)-u_3(u_3\partial_yu_1-u_1\partial_yu_3) +u_3\partial_yu_1-u_1\partial_yu_3}{(1-u_3)^2}  \\
                                   \end{array}
                                 \right) \notag\\
 &=& \left(
 \begin{array}{c}
 \dfrac{u_2\partial_yu_3-u_3\partial_yu_2+\partial_yu_2}{(1-u_3)^2} \\
 -\dfrac{u_1\partial_yu_3-u_3\partial_yu_1+\partial_yu_1}{(1-u_3)^2} \\
 \end{array}
 \right) = D\mathcal S^{-1}_{u}(u_{x}).\notag
\end{eqnarray}
Hence $u_x=u\times u_y$.
This completes the proof.
\end{proof}

\begin{remark}\label{r:anti-cr}
A similar computation yields that $u$ satisfies (\ref{e:anti-JH}) if and only if
\begin{eqnarray}\label{e:cr}
\mathcal S^{-1}(u)=\dfrac{u_1+iu_2}{1-u_3}
\end{eqnarray}
satisfies the anti-version of Cauchy-Riemann equations in $\mathbb C\setminus\mathcal S^{-1}(N)$.
\end{remark}

\begin{lemma}\label{l:equivalence}
$u$ satisfies (\ref{e:harmonic-map}) if and only if $u$ satisfies (\ref{e:JH}) or (\ref{e:anti-JH}).
\end{lemma}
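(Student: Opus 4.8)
The plan is to prove the two implications separately, the nontrivial direction resting on the classification Proposition \ref{p:harmonic-holomorphic} together with Lemma \ref{l:first-cr}.

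For the implication (\ref{e:JH}) or (\ref{e:anti-JH}) $\Rightarrow$ (\ref{e:harmonic-map}), I would argue directly. Suppose $u_x = u\times u_y$. Crossing both sides on the left with $u$ and using $|u|=1$, $u\cdot u_y=0$ gives $u_y = -u\times u_x$. Differentiating the first identity in $x$ and the second in $y$ and adding yields $\Delta u = u_x\times u_y - u_y\times u_x = 2\,u_x\times u_y$, and the Lagrange identity $a\times(b\times c)=(a\cdot c)b-(a\cdot b)c$ gives $u_x\times u_y = (u\times u_y)\times u_y = -|u_y|^2 u$. Since $|u_x|^2 = |u\times u_y|^2 = |u_y|^2$, we have $|\nabla u|^2 = |u_x|^2+|u_y|^2 = 2|u_y|^2$, whence $\Delta u + |\nabla u|^2 u = -2|u_y|^2u + 2|u_y|^2u = 0$. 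The case (\ref{e:anti-JH}) is identical after interchanging $x$ and $y$. (Alternatively one may invoke (\ref{e:energy-first-order})–(\ref{e:minus-degree}): a finite-energy solution of (\ref{e:JH}) or (\ref{e:anti-JH}) minimizes $\mathcal E$ in its homotopy class, so it is a critical point of $\mathcal E$ and hence solves (\ref{e:harmonic-map}).)

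For the converse, suppose $u$ solves (\ref{e:harmonic-map}). By Proposition \ref{p:harmonic-holomorphic}, $u$ is holomorphic or anti-holomorphic as a map into the complex manifold $\mathbb S^2$. If $u$ is constant the conclusion is immediate, so assume $u$ nonconstant; then $u^{-1}(N)=\mathcal S^{-1}(N)$ is a discrete subset of $\mathbb C$. On its complement $\mathcal S^{-1}(u)$ is a holomorphic (resp. anti-holomorphic) $\mathbb C$-valued function, hence satisfies the Cauchy--Riemann (resp. anti-Cauchy--Riemann) equations there; the ``conversely'' part of Lemma \ref{l:first-cr} (resp. Remark \ref{r:anti-cr}) then shows that $u$ satisfies (\ref{e:JH}) (resp. (\ref{e:anti-JH})) on $\mathbb C\setminus\mathcal S^{-1}(N)$. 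Since $u$ is smooth by elliptic regularity and $\mathcal S^{-1}(N)$ is discrete, the first-order equation extends to all of $\mathbb C$ by continuity.

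The only step requiring care is this last one — propagating the first-order equation across the isolated preimages of the north pole and using the correct chart near those points; all the genuine content is already packaged into Proposition \ref{p:harmonic-holomorphic} and Lemma \ref{l:first-cr}, so beyond that bookkeeping there is no real obstacle.
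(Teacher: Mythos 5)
Your proof is correct, but the harder direction takes a genuinely different route from the paper's. For the implication ((\ref{e:JH}) or (\ref{e:anti-JH})) $\Rightarrow$ (\ref{e:harmonic-map}) your cross-product computation is essentially the paper's. For the converse, you invoke the classification Proposition \ref{p:harmonic-holomorphic} together with Lemma \ref{l:first-cr} and Remark \ref{r:anti-cr}; the paper's appendix proof is instead a direct, self-contained PDE argument: it shows that (\ref{e:harmonic-map}) forces the Cauchy--Riemann relations between $\tfrac{1}{2}\left(|u_x|^2-|u_y|^2\right)$ and $u_x\cdot u_y$ (holomorphy of the Hopf differential), uses the mean value property together with $\mathcal E(u)<\infty$ to conclude that these quantities vanish, and then deduces $u_x\perp u_y$, $|u_x|=|u_y|$, and hence $u_x=\pm\, u\times u_y$ by elementary vector algebra. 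Your route is the one the paper itself sketches in Section \ref{e:classification}, and it is logically sound if Proposition \ref{p:harmonic-holomorphic} is accepted as a black box; but note two things. First, the paper never proves that proposition (it is cited from the literature), and its standard proof is precisely the Hopf-differential argument, so the appendix computation is what keeps the equivalence self-contained --- your version buys brevity at the cost of importing the deepest ingredient. Second, both the classification and the lemma fail without the finite-energy hypothesis: $u(x,y)=(\cos(ax+by),\sin(ax+by),0)$ solves (\ref{e:harmonic-map}) but satisfies neither (\ref{e:JH}) nor (\ref{e:anti-JH}); you should say explicitly that $\mathcal E(u)<\infty$ is being used, since in your argument it is hidden inside Proposition \ref{p:harmonic-holomorphic}, whereas the paper's proof invokes it visibly in the Liouville step. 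Your handling of the extension of the first-order equation across the discrete set $\mathcal S^{-1}(N)$ by continuity is fine.
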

\begin{proof}
1. ``$\Longrightarrow$'': Since $u_x=J^uu_y$, we find that
\begin{eqnarray}
u_{xx}=u_x\times u_y+u\times u_{xy}.\notag
\end{eqnarray}
Further note that $u_y=-J^u u_x$, then
\begin{eqnarray}
u_{yy}=-u_y\times u_x-u\times u_{xy}.\notag
\end{eqnarray}
It follows that
\begin{eqnarray}
\Delta u=2u_x\times u_y.\notag
\end{eqnarray}
Recall that it is the standard H-bubble equation.
On the other hand, from (\ref{e:JH}) we have that
\begin{eqnarray}
2(u_x\times u_y)\cdot u=-|\nabla u|^2.\notag
\end{eqnarray}
Therefore, (\ref{e:harmonic-map}) holds.

2. ``$\Longleftarrow$'': Compute
\begin{eqnarray}
\partial_{xx}u\cdot \partial_x u=\dfrac{1}{2}\partial_x |\partial_x u|^2,\notag
\end{eqnarray}
\begin{eqnarray}
\partial_{yy}u\cdot \partial_x u&=&\partial_x (\partial_{yy}u\cdot u)-(\partial_x\partial_{yy}u)\cdot u\notag\\
&=&\partial_x(\partial_y(\partial_yu\cdot u)-\partial_y u\cdot \partial_y u)-(\partial_y(\partial_{xy}u\cdot u)-\partial_{xy}u\cdot \partial_y u)\notag\\
&=&-\partial_x|\partial_y u|^2-\partial_y(\partial_{xy}u\cdot u)+\partial_{xy}u\cdot \partial_y u\notag\\
&=&-\partial_x|\partial_y u|^2+\partial_y(\partial_{x}u\cdot \partial_y u)+\partial_y(\partial_{x}u\cdot \partial_y u)-\partial_x u\cdot \partial_{yy}u\notag
\end{eqnarray}
It follows that
\begin{eqnarray}
\partial_{yy}u\cdot \partial_x u=-\dfrac{1}{2}\partial_x|\partial_y u|^2+\partial_y(\partial_{x}u\cdot \partial_y u).\notag
\end{eqnarray}
By (\ref{e:harmonic-map}), we have that
\begin{eqnarray}
0=\partial_{xx}u\cdot \partial_x u+\partial_{yy}u\cdot \partial_x u=\dfrac{1}{2}\partial_x( |\partial_x u|^2-|\partial_y u|^2)+\partial_y(\partial_{x}u\cdot \partial_y u).\notag
\end{eqnarray}
Similarly,
\begin{eqnarray}
0=\partial_{xx}u\cdot \partial_y u+\partial_{yy}u\cdot \partial_y u=\dfrac{1}{2}\partial_y( |\partial_y u|^2-|\partial_x u|^2)+\partial_x(\partial_{x}u\cdot \partial_y u).\notag
\end{eqnarray}
It follows that
\begin{eqnarray}
\Delta (\partial_{x}u\cdot \partial_y u)=0.\notag
\end{eqnarray}
From $|\partial_{x}u\cdot \partial_y u|\le \dfrac{1}{2}|\nabla u|^2$, mean value property and $\mathcal E(u)<\infty$, it holds that
\begin{eqnarray}
\partial_{x}u\cdot \partial_y u=0.\notag
\end{eqnarray}
Then recalling that $u_x\cdot u=0$ and $u_y\cdot u=0$, we may assume that
\begin{eqnarray}
ku_x=u\times u_y.\notag
\end{eqnarray}
That means
\begin{eqnarray}
ku_y=-u\times u_x.\notag
\end{eqnarray}
Note that
\begin{eqnarray}
ku_x\cdot u_x=(u\times u_y)\cdot u_x\quad \mbox{ and }\quad ku_y\cdot u_y=-(u\times u_x)\cdot u_y.\notag
\end{eqnarray}
Then we find that
\begin{eqnarray}
|u_x|=|u_y|.\notag
\end{eqnarray}
Therefore, $|k|=1$. This completes the proof.
\end{proof}

We now give a proof of Corollary \ref{t:main} by ODE.

Let
\[
u=\left(
\begin{array}
[c]{c}%
u_{1}\\
u_{2}\\
u_{3}%
\end{array}
\right)=\left(
\begin{array}
[c]{c}%
\cos\phi\sin Q\\
\sin\phi\sin Q\\
\cos Q
\end{array}
\right).
\]
where $\phi$ and $Q$ are two real functions on $\mathbb R^2$. In polar coordinates,
the energy functional becomes
\begin{eqnarray}\label{e:energy}
\mathcal E(u)=\mathcal E\left(\phi,Q\right)  =\int_{\mathbb R^2}\left(  \sin^{2}Q\left\vert \nabla\phi\right\vert
^{2}+\left\vert \nabla Q\right\vert ^{2}\right)  .
\end{eqnarray}
Hence if $\left(\phi,Q\right)$ is a critical point, then it satisfies the following system:
\begin{equation}\label{e:phi-q}
\left\{\begin{array}{l}
         \operatorname{div}\left( \sin^{2}Q\nabla\phi\right) =0, \\
         -2\Delta Q+\sin\left(  2Q\right)  \left\vert \nabla\phi\right\vert ^{2}=0.
       \end{array}\right.
\end{equation}
Choose $\phi=\phi_m=m\theta$, $Q=Q_m=\pi-2\arctan r^m$.
The linearized system of (\ref{e:phi-q}) at $(\phi_m,Q_m)$ is
\begin{equation}\label{e:phi-q2}
\left\{
\begin{array}
{l}%
\left(\dfrac{1}{2}\tan Q_m\right)\Delta\xi+(Q_m)_{r}\xi_{r}+\dfrac{m}{r^{2}}\eta_{\theta}=0,\\
-\Delta\eta+\dfrac{m^{2}}{r^{2}}\cos\left(  2Q_m\right)  \eta+\sin\left(
2Q_m\right)  \dfrac{m}{r^{2}}\xi_{\theta}=0.
\end{array}
\right.
\end{equation}
By some direct computations, (\ref{e:phi-q2}) becomes
\begin{equation}\label{e:ODE}
\left\{
\begin{array}
{l}%
- \xi_{rr}-\dfrac{1}{r}\xi_{r}-\dfrac{1}{r^{2}%
}\partial_{\theta}^{2}\xi -\dfrac{2m(1-r^{2m})}{r(1+r^{2m})}\xi_{r}+\dfrac{m(1-r^{2m})}{r^{m+2}%
}\eta_{\theta}=0,\\
-\eta_{rr}-\dfrac{\eta_{r}}{r}-\dfrac{1}{r^{2}}\partial_{\theta}^{2}\eta
+\dfrac{r^{4m}-6r^{2m}+1}{(1+r^{2m})^2}\dfrac{m^2}{r^{2}} \eta-\dfrac{4r^m(1-r^{2m})}{(1+r^{2m})^2}\dfrac{m}{r^{2}}\xi_{\theta}=0.
\end{array}
\right.
\end{equation}
In $\mathbb R^3$, the kernel maps are given by
\begin{eqnarray}\label{e:e-general-form}
E&=&\dfrac{d}{dt}\left.\left(
              \begin{array}{c}
                \cos (\phi_m+t\xi)\sin (Q_m+t\eta) \\
                \sin(\phi_m+t\xi) \sin (Q_m+t\eta)\\
                \cos (Q_m+t\eta) \\
              \end{array}
            \right)\right|_{t=0}\notag\\
            &=&\left(
                                   \begin{array}{c}
                                     -\xi\sin\phi_m\sin Q_m+\eta\cos \phi_m \cos Q_m \\
                                     \xi\cos\phi_m\sin Q_m+\eta\sin \phi_m \cos Q_m \\
                                     -\eta\sin Q_m \\
                                   \end{array}
                                 \right).
\end{eqnarray}

Assume that $\xi(r,\theta)=\xi_1(r)\cos(k\theta)+\xi_2(r)\sin(k\theta)$ and $\eta(r,\theta)=\eta_1(r)\cos(k\theta)+\eta_2(r)\sin(k\theta)$ with $k\in \mathbb N$. Then we have
\begin{equation}\label{e:general-m}
\left\{
\begin{array}
{l}%
- \xi_{1rr}-\dfrac{1}{r}\xi_{1r}+\dfrac{k^2}{r^{2}}\xi_{1}
-\dfrac{2m\left(  1-r^{2m}\right)  }{r\left(  1+r^{2m}\right)  }\xi_{1r}%
+\dfrac{mk\left(  1-r^{2m}\right)  }{r^{m+2}}\eta_{2}=0,\\
-\xi_{2rr}-\dfrac{1}{r}\xi_{2r}+\dfrac{k^2}{r^{2}}\xi_{2}
-\dfrac{2m\left(  1-r^{2m}\right)  }{r\left(  1+r^{2m}\right)  }\xi_{2r}%
-\dfrac{mk\left(  1-r^{2m}\right)  }{r^{m+2}}\eta_{1}=0,\\
-\eta_{1rr}-\dfrac{\eta_{1r}}{r}+\dfrac{k^2}{r^{2}}\eta_{1}+\dfrac{r^{4m}-6r^{2m}+1}{(1+r^{2m})^2}\dfrac{m^2}{r^{2}}\eta
_{1}+\dfrac{-4mk r^{m-2}\left(  1-r^{2m}\right)  }{\left(  1+r^{2m}\right)  ^{2}}\xi
_{2}=0,\\
-\eta_{2rr}-\dfrac{\eta_{2r}}{r}+\dfrac{k^2}{r^{2}}\eta_{2}+\dfrac{r^{4m}-6r^{2m}+1}{(1+r^{2m})^2}\dfrac{m^2}{r^{2}}\eta
_{2}-\dfrac{-4mk r^{m-2}\left(  1-r^{2m}\right)  }{\left(  1+r^{2m}\right)  ^{2}}\xi
_{1}=0.
\end{array}
\right.
\end{equation}
Therefore, the solutions of these systems are of form
\begin{equation*}
\left(\begin{array}{c}
        \xi \\
        \eta
      \end{array}
\right)=C_1\left(
          \begin{array}{c}
            \xi_1 \cos k\theta \\
            \eta_2 \sin k\theta \\
          \end{array}
        \right)+C_2\left(
          \begin{array}{c}
            \xi_2 \sin k\theta \\
            \eta_1 \cos k\theta \\
          \end{array}
        \right),
\end{equation*}
where $C_1,\,C_2$ are two arbitrary constants. By (\ref{e:e-general-form}), we have that
\begin{eqnarray}\label{e:E-general}
E&=&C_1\left[\xi_1\cos k\theta \sin Q_m E_1+\eta_2\sin k\theta E_2
\right]
+C_2\left[\xi_2\sin k\theta \sin Q_m E_1+\eta_1\cos k\theta E_2
\right]\\
&=& C_1\left[\xi_1\dfrac{2r^m}{1+r^{2m}}\cos k\theta  E_1+\eta_2\sin k\theta E_2
\right]
+C_2\left[\xi_2\dfrac{2r^m}{1+r^{2m}}\sin k\theta E_1+\eta_1\cos k\theta E_2
\right].\notag
\end{eqnarray}

System (\ref{e:general-m}) can be solved by different cases of $k$.
If $k=0$, then
\begin{equation*}
\xi_1(r)=C_1+C_2\dfrac{r^{4m}+4m\,r^{2m}\ln r-1}{2m\, r^{2m}},
\end{equation*}
\begin{equation*}
\eta_1(r)=C_3\dfrac{r^m}{1+r^{2m}}+C_4\dfrac{r^{4m}+4m\,r^{2m}\ln r-1}{r^m(1+r^{3m})}.
\end{equation*}
If $k\ne m$, then the solutions of (\ref{e:general-m}) can be given by
\begin{multline*}
\xi_1(r)=C_1\left(-\dfrac{1}{2r^k}\right)+C_2\dfrac{r^{k}}{2}\\
-C_3\dfrac{r^{k+4m}+\dfrac{4k+4m}{k}r^{k+2m}+\dfrac{k+m}{k-m}r^{k}}{2 r^{2m}}+C_4\dfrac{-r^{4m}+\dfrac{4 k-4m}{k}r^{2m}+\dfrac{k-m}{k+m}}{r^{k+2m}},
\end{multline*}
\begin{equation*}
\eta_2(r)=C_1\dfrac{1}{(1+r^{2m})r^{k-m}}+C_2\dfrac{r^{k+m}}{1+r^{2m}}
+C_3\dfrac{r^{k+4m}+\dfrac{k+m}{k-m}r^{k}}{(1+r^{2m})r^{m}}+C_4\dfrac{2r^{4m}+\dfrac{2k-2m}{k+m}}{(1+r^{2m})r^{k+m}},
\end{equation*}
and
\begin{multline*}
\xi_2(r)=C_5\left(-\dfrac{1}{2r^k}\right)+C_6\dfrac{r^{k}}{2}\\
-C_7\dfrac{r^{k+4m}+\dfrac{4k+4m}{k}r^{k+2m}+\dfrac{k+m}{k-m}r^{k}}{2 r^{2m}}+C_8\dfrac{-r^{4m}+\dfrac{4 k-4m}{k}r^{2m}+\dfrac{k-m}{k+m}}{r^{k+2m}},
\end{multline*}
\begin{equation*}
\eta_1(r)=-C_5\dfrac{1}{(1+r^{2m})r^{k-m}}-C_6\dfrac{r^{k+m}}{1+r^{2m}}
-C_7\dfrac{r^{k+4m}+\dfrac{k+m}{k-m}r^{k}}{(1+r^{2m})r^{m}}-C_8\dfrac{2r^{4m}+\dfrac{2k-2m}{k+m}}{(1+r^{2m})r^{k+m}}.
\end{equation*}
If $k=m$, the solutions are
\begin{eqnarray}
\xi_1(r)&=&C_1\dfrac{r^{2m}-1}{2r^{m}}-C_2\dfrac{1}{4m\, r^m}\notag\\
&&+C_3\dfrac{-r^{6m}+4m\, r^{4m}\ln r-7r^{4m}-4m\,r^{2m}\ln r-13r^{2m}-1}{4m\, r^{3m}}\notag\\
&&+C_4\dfrac{4m\,r^{4m}\ln r-7r^{2m}-1}{4m r^{3m}},\notag
\end{eqnarray}
\begin{multline*}
\eta_2(r)=C_1+C_2\dfrac{1}{2m(1+r^{2m})}+C_3\dfrac{r^{6m}+4m\,r^{4m}\ln r+r^{4m}+4m\,r^{2m}\ln r+5r^{2m}-1}{2m r^{2m}(1+r^{2m})}\\
+C_4\dfrac{4m \,r^{4m}\ln r-r^{2m}-1}{2m r^{2m}(1+r^{2m})},
\end{multline*}
and
\begin{eqnarray}
\xi_2(r)&=&C_5\dfrac{r^{2m}-1}{2r^{m}}-C_6\dfrac{1}{4m\, r^m}\notag\\
&&+C_7\dfrac{-r^{6m}+4m\, r^{4m}\ln r-7r^{4m}-4m\,r^{2m}\ln r-13r^{2m}-1}{4m\, r^{3m}}\notag\\
&&+C_8\dfrac{4m\,r^{4m}\ln r-7r^{2m}-1}{4m r^{3m}},\notag
\end{eqnarray}
\begin{multline*}
\eta_1(r)=-C_5-C_6\dfrac{1}{2m(1+r^{2m})}-C_7\dfrac{r^{6m}+4m\,r^{4m}\ln r+r^{4m}+4m\,r^{2m}\ln r+5r^{2m}-1}{2m r^{2m}(1+r^{2m})}\\
-C_8\dfrac{4m \,r^{4m}\ln r-r^{2m}-1}{2m r^{2m}(1+r^{2m})}.
\end{multline*}
Investigating these explicit solutions and using (\ref{e:E-general}),
we find that the bounded kernel maps are linearly combinations (\ref{e:ek1}) (\ref{e:ek3}) and (\ref{e:ek5}) .

\end{document}